\newtheorem*{rep@theorem}{\rep@title}
\newcommand{\newreptheorem}[2]{%
\newenvironment{rep#1}[1]{%
 \def\rep@title{#2 \ref{##1}}%
 \begin{rep@theorem}}%
 {\end{rep@theorem}}}
\numberwithin{equation}{section}
\theoremstyle{plain}
\newtheorem{theorem}{Theorem}[section]
\newtheorem{proposition}[theorem]{Proposition}
\newtheorem{conjecture}[theorem]{Conjecture}
\theoremstyle{definition}
\newtheorem{definition}[theorem]{Definition}
\newtheorem{remark}[theorem]{Remark}
\begin{document}
\title{Exploring the Interplay of Semistable Vector Bundles and Their Restrictions on Reducible Curves}
\author{Suhas B N, Praveen Kumar Roy and Amit Kumar Singh}
\address{Chennai Mathematical Institute, H1 SIPCOT IT Park, Siruseri, Kelambakkam 603103, India}
\email{chuchabn@gmail.com}
\address{UM-DAE Centre for Excellence in Basic Sciences, University of Mumbai Santacruz, Mumbai 400098, India}
\email{praveen.roy@cbs.ac.in}
\address{Department of Mathematics, SRM University AP, Amaravati 522240, Andhra Pradesh, India}
\email{amitks.math@gmail.com}
\keywords{Vector bundles, Semi(stability), Comb-like curves, Restriction of vector bundles, Strongly unstable bundles.}
\subjclass[2020]{14H60} 
\maketitle

\begin{abstract} Let $C$ be a comb-like curve over $\mathbb{C}$, and $E$ be a vector bundle of rank $n$ on $C$. In this paper, we investigate the criteria for the semistability of the restriction of $E$ onto the components of $C$ 
when $E$ is given to be semistable with respect to a polarization $w$. 
As an application, assuming each irreducible component of $C$ is general in its moduli space, we investigate the $w$-semistability of kernel bundles on such curves, extending the results (completely for rank two and partially for higher rank) known in the case of a reducible nodal curve with two smooth components, but here, using different techniques.
\end{abstract}

\section{Introduction}
Let $C$ be a curve of compact type having $N\geq 2$ components $C_1,\dots,C_N$ over the field of complex numbers. Let $E$ be a vector bundle of rank $n$ on $C$ obtained by gluing vector bundles $E_j$ of rank $n$ on $C_j$, and let $\chi$ and $\chi_j$ denote the Euler characteristics of $E$ and $E_j$ respectively. Let $w=(w_1,\dots,w_N)$ be a fixed $n$-tuple of positive rational numbers such that $\sum\limits_{j=1}^N w_j =1$ (called polarization). If $\chi_j$ and $w_j$ satisfy the inequalities 
\begin{equation}\label{polarization inequalities}
   (\sum_{i=1}^j w_i)\chi - \sum_{i=1}^{j-1}\chi_i + n(j-1) \leq \chi_j \leq (\sum_{i=1}^i w_i)\chi - \sum_{i=1}^{j-1}\chi_i + nj,
     \end{equation}
for $j = 1,\ldots N-1$ and $E_j$ is a semistable vector bundle for each $j$, then by \cite[Theorem 1, Step II]{Bigas91}, the bundle $E$ is semistable with respect to the polarization $w = (w_1,\dots,w_N)$. Conversely, if $E$ is semistable with respect to $w$, then by \cite[Theorem 1, Step I]{Bigas91}, it is known that $\chi$, $\chi_j$ and $w_j$ satisfy the inequalities \eqref{polarization inequalities}.However, if we start with a $w$-semistable bundle $E$, a natural question arises: can we conclude that each $E_j$ is semistable? More precisely, if $E_j$ is a vector bundle of rank $n$ on $C_j$ for each $j$, and $E$, which is a bundle of rank $n$ on $C$ obtained by gluing $E_j$, is known to be $w$-semistable, does this imply that each $E_j$ is semistable?

     If $C$ is reducible nodal curve with two smooth components $C_1$ and $C_2$ intersecting at a node $p$, and $E$ is a vector bundle on $C$ of rank two such that $E$ is semistable with respect to a fixed polarization $w = (w_1,w_2)$, 
     a theorem of Nagaraj-Seshadri (\cite[Theorem 5.1]{nagaraj-seshadri96}) proves that both $E_1$ and $E_2$ are semistable, provided $w_1 \chi \notin \mathbb{Z}$.      
     
In this article, we address this question by focusing on a particular class of reducible nodal curves of compact type, referred to as a \textit{comb-like curves}. 
In section \ref{section:2}, we begin by recalling the definition of these curves and observe that on such curves, the inequalities \eqref{polarization inequalities} simplify to 
     \[ w_j \chi \leq \chi_j \leq w_j \chi + n, \]
for $j = 1, \dots , N-1$ (see Theorem \ref{Ineq:Teix-Bigas}). Subsequently, we prove the following: 
    
     \begin{reptheorem}{rank 2 necessary condition}
Let $C$ be a comb-like curve and $E$ be a $w$-semistable vector bundle of rank 2 on $C$. Then for $j \in \{1,\ldots, N-1\}$, if $w_j \chi \notin \mathbb{Z}$, the following holds: 
\begin{enumerate}
    \item[(i)] If $w_j \chi + 1 < \chi_j < w_j \chi + 2$, then $E_j$ is semistable.
    \item[(ii)] If $\chi_j$ is even, then $E_j$ is semistable.
    \item[(iii)] If $E_j$ is not semistable, then the destabilizing line subbundle $L_j$ of $E_j$ has to satisfy $\chi(L_j) = \frac{\chi_j}{2} + \frac{1}{2}$.
\end{enumerate}
     \end{reptheorem}
 
For the higher rank case, we prove a similar result but with stronger assumptions:

\begin{reptheorem}{theorem for vb of rank n}
Let $C$ be a comb-like curve and $E$ be a $w$-semistable vector bundle of rank $n \geq 2$ on $C$. Then for $j \in \{1,\ldots, N-1\}$, if $w_j \chi \notin \mathbb{Z}$, the following holds:
\begin{enumerate}
    \item[(i)] If $n$ divides $\chi_j$ and $w_j \chi + (n-1) < \chi_j < w_j \chi + n$, then $E_j$ is semistable.
    \item[(ii)] If $n$ divides $\chi_j$, $E_j$ is not semistable and $L_j \subset E_j$ is a destabilizing subbundle of rank $k$, then $\chi(L_j) = \frac{k\chi_j}{n} + a$ for some $a \in \{1,\dots,k-1\}$. 
    \item[(iii)] If $E_j$ is not semistable and $L_j \subset E_j$ is a destabilizing subbundle of rank $k$ such that $k$ divides $\chi(L_j)$, then $\frac{\chi(L_j)}{k} = \frac{\chi_j}{n} + \frac{(n-r_j)}{n}$, where $r_j$ is the remainder obtained when $\chi_j$ is divided by $n$.
\end{enumerate}
\end{reptheorem}
A crucial point that comes out of these results is that under the above mentioned hypotheses, even if $E_j$ is not possibly semistable, we can precisely determine all possible values for the Euler characteristics of the destabilizing subbundles of $E_j$ in terms of the ranks of those subbundles and the Euler characteristic of $E_j$. This plays an important role in section \ref{Application to Kernel Bundles}. 

As an application of these results, in section \ref{Application to Kernel Bundles} we investigate the $w$-semistability of \textit{kernel bundle} associated to a \textit{generated pair} on $C$. Let $E$ be a globally generated vector bundle on $C$ of rank $n$, and $V \subseteq H^0(C,E)$ be a subspace of dimension $l > n$ that generates $E$. We call such a pair $(E,V)$, a generated pair on $C$. The kernel bundle $M_{E,V}$ associated to the generated pair $(E,V)$ is then defined by the following exact sequence:
 \begin{equation*}\label{defining sequence for the kernel bundle}
0 \rightarrow M_{E,V} \rightarrow V \otimes \mathcal{O}_C \rightarrow E \rightarrow 0.
\end{equation*}
When $V = H^0(C,E)$, we denote the kernel bundle by $M_E$. The semistablity of $M_{E,V}$ over a smooth projective curve has been a longstanding problem. In \cite{Butler-conj},  Butler proposed a conjecture regarding the semistability of $M_{E, V}$ for a semistable vector bundle $E$ on $C$, one version of which states:
 \begin{conjecture}\label{Butler-conjecture}
Let $C$ be a general smooth curve of genus $g \geqslant 3$. Then, for a general choice of a generated pair $(E, V)$ on $C$, where $E$ is a semistable vector bundle, the kernel bundle $M_{E, V}$ is semistable.
\end{conjecture}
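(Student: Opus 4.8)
The plan is to deduce the conjecture from the restriction theorems stated above by a degeneration argument, passing from a general smooth curve of genus $g$ to a comb-like curve on which the component-wise analysis applies. First I would fix the numerical invariants of the generated pair and construct a family $\mathcal{C} \to \Delta$ over a disc whose general fibre is a general smooth curve of genus $g$ and whose special fibre is a comb-like curve $C$ of compact type with arithmetic genus $g$. On each component $C_j$ I would choose a general semistable bundle $E_j$ together with a general subspace of its sections, glue these to a globally generated bundle $E$ on $C$ equipped with a subspace $V \subseteq H^0(C,E)$ of dimension $l$, and form the kernel bundle $M_{E,V}$. The point of working on the special fibre is that the generality of the $(E_j, V_j)$ makes the restricted pairs amenable to the earlier analysis.

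The heart of the argument is to show that $M_{E,V}$ is $w$-semistable on $C$ for a suitably chosen polarization $w$. For this I would analyze the restriction $M_{E,V}|_{C_j}$ and relate it to the kernel bundle of the restricted pair $(E_j, V_j)$, where $V_j$ denotes the image of $V$ in $H^0(C_j, E_j)$. Because each $E_j$ and each $V_j$ is general, the Euler characteristic $\chi(M_{E,V}|_{C_j})$ can be computed exactly by a Riemann--Roch count, and the polarization $w$ can be chosen so that these invariants satisfy the Teixidor i Bigas inequalities $w_j\chi' \le \chi'_j \le w_j\chi' + \operatorname{rk} M_{E,V}$ recalled in Theorem \ref{Ineq:Teix-Bigas} (here $\chi'$ and $\chi'_j$ denote the Euler characteristics of $M_{E,V}$ and of its restrictions). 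Whenever the restrictions are themselves semistable, the gluing direction \cite[Theorem 1, Step II]{Bigas91} gives the $w$-semistability of $M_{E,V}$ at once.

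The main obstacle is precisely that the restrictions $M_{E,V}|_{C_j}$ need not be semistable, and this is where the refined information from Theorem \ref{rank 2 necessary condition} and Theorem \ref{theorem for vb of rank n} becomes essential. Even when $M_{E,V}|_{C_j}$ fails to be semistable, those theorems pin down the admissible Euler characteristics $\chi(L_j)$ of any destabilizing subbundle $L_j$ in terms of its rank and of $\chi_j$, under the hypothesis $w_j\chi \notin \mathbb{Z}$, which can be arranged generically. I would use this to exclude destabilizing subsheaves of $M_{E,V}$ on the nodal curve directly: any such subsheaf restricts to subsheaves $L_j$ on the components, and the congruence and interval constraints forced by the restriction theorems push the global slope inequality the right way. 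The delicate point will be controlling the contribution of the local gluing data at each node to $\chi(L_j)$, and verifying that the generality of the $(E_j, V_j)$ is strong enough that the destabilizing configurations permitted by the numerical constraints do not actually occur.

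Finally, since semistability is an open condition in flat families, once $M_{E,V}$ is shown to be $w$-semistable on the special fibre $C$ I would extend $(E,V)$ to a family of generated pairs over $\Delta$ whose kernel bundles vary in a flat family; openness then shows that the kernel bundle on the general smooth fibre is semistable, and a dimension count on the space of generated pairs promotes this to the statement for a \emph{general} pair on a general smooth curve of genus $g \ge 3$. I expect this semicontinuity and specialization step to be routine, with the genuine difficulty residing entirely in the component-wise semistability analysis of the preceding paragraph.
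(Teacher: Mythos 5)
There is no proof of this statement in the paper to compare against: the paper states it as an open conjecture (Butler's conjecture), explicitly notes that it ``still remains open'' in higher rank, and in Theorem \ref{The iff condition} \emph{assumes} it as a hypothesis. So the only question is whether your sketch could stand on its own, and it cannot as written. The most serious problem is circularity: your degeneration reduces semistability of $M_{E,V}$ on the smooth genus-$g$ curve to $w$-semistability of a kernel bundle on a comb-like curve, and the only mechanism the paper offers for establishing the latter is Theorem \ref{thm: existence of polarization}, which requires $M_{E,V}\otimes\mathcal{O}_{C_j}$ to be semistable for every $j$; via the commutative diagram in Section \ref{Application to Kernel Bundles} this forces $\ker(\rho_{j|_V})=0$ and reduces to the semistability of $M_{E_j,V_j}$ on the components $C_j$ --- which is exactly Butler's conjecture again (known for line bundles by Theorem \ref{Butler's conjecture for line bundles}, open in higher rank). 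You also invert the logical direction of Theorems \ref{rank 2 necessary condition} and \ref{theorem for vb of rank n}: those are \emph{necessary} conditions on the restrictions \emph{deduced from} an assumed $w$-semistability of the global bundle, not tools for excluding destabilizing subsheaves and thereby \emph{proving} $w$-semistability. Worse, the paper's Theorem \ref{cor: on strongly unstable} shows the phenomenon runs against your strategy: whenever $\ker(\rho_{j|_V})\neq 0$ the kernel bundle is strongly unstable, i.e.\ not semistable for \emph{any} polarization, so the ``destabilizing configurations permitted by the numerical constraints'' do occur and cannot be waved away by generality.

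Two further gaps would remain even if the component-wise step were settled. First, the specialization step is not routine: $w$-semistability on the nodal special fibre and slope semistability on the smooth general fibre must be placed in a common relative moduli problem (a Seshadri/Gieseker-type degeneration), and you must show that the kernel bundles actually form a flat family whose special fibre is $M_{E,V}$ on $C$, which requires constancy of $h^0$ and an extension of $V$ across the family; none of this is supplied. Second, openness of semistability gives the conclusion only for the particular limit pair, and upgrading to a \emph{general} pair on a \emph{general} curve requires an irreducibility statement for the relevant parameter space of generated pairs, which is itself nontrivial in higher rank. In short, your outline is a reasonable description of a known line of attack, but every one of its load-bearing steps is either open or circular, which is consistent with the paper treating the statement as a conjecture rather than a theorem.
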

Even before the conjecture was made, some specific cases were proven. For instance, over a smooth projective curve of genus $g \geq 2$, Ein and Lazarsfeld demonstrated that $M_{L}$ is (semi)stable whenever $L$ is a line bundle of $\deg L  (\geqslant) > 2g$ \cite[Proposition 3.1]{Ein92}. Butler generalized this result for the case of a (semi)stable vector bundle of any rank \cite[Theorem 1.2]{Butler94}. One of the motivations for studying this question of semistability of $M_{E,V}$ was because of its deep connections with higher rank Brill-Noether theory \cite{Beauville2006}.

In 2015, Bhosle, Brambila-Paz and Newstead proved the conjecture for the case of a line bundle $L$ \cite{Brambila-Newstead2019}. However, for the higher rank case, it still remains open, although significant work has been done to address this conjecture (see for example,\cite{Brambila-Newstead2019}). 

More recently, Brivio and Favale studied the (semi)stability of $M_{E, V}$ over a reducible nodal curve with two smooth components intersecting at a node \cite{Brivio-Favale2020}. Subsequently, Suhas B N, Majumdar and Singh investigated the (semi)stability of $M_{E, V}$ over chain-like curves \cite{SSA}. Surprisingly, these results on reducible nodal curves show that $M_{E,V}$ is almost always strongly unstable (see Definition \ref{defn: w-semistable and strongly unstable}). They also provided a sufficient condition under which $M_{E,V}$ becomes semistable with respect to a polarization.

We extend the results of Brivio-Favale for the case of a comb-like curve $C$. Notably, the techniques employed here for proving the results on strong unstability of $M_{E,V}$ differs from theirs and use much lesser hypothesis. To be more precise, in \cite[Theorem 2.4]{Brivio-Favale2020}, it is assumed as a part of the hypothesis that $\ker ({\rho_j}_{\mid_V}) \neq 0$ and $E_j$ is semistable for each $j$. However, we require $\ker ({\rho_j}_{\mid_V}) \neq 0$ for some $j \in \{1,\dots,N-1\}$ and $\text{deg}(E_j)$ to avoid a specific value for that particular $j$ (if rank($M_{E,V}) > 2$). We do not need any assumption on the semistability of $E_j$. More precisely, we prove the following result:
\begin{reptheorem}{cor:  on strongly unstable}
Let $(E, V)$ be a generated pair on a comb-like curve $C$. Then the following holds:
\begin{enumerate}
    \item[(i)] If $\text{rank}(M_{E,V}) = 2$, and $\ker ({\rho_j}_{\mid_V}) \neq 0$ for some $j$, then $M_{E,V}$ is strongly unstable.
    \item[(ii)] If $\text{rank}(M_{E,V}) = (l-n) > 2$, $\ker ({\rho_j}_{\mid_V}) \neq 0$ for some $j \in \{1,\dots,N-1\}$ and $d_j \neq (l-n) - r_j$ for that particular $j$ (where $r_j$ is the reminder obtained when $\chi_j$ is divided by (l-n)), then $M_{E, V}$ is strongly unstable.
\end{enumerate}
\end{reptheorem}
We then provide a sufficient condition for $M_{E,V}$ to be semistable with respect to a polarization $w$ (Theorem \ref{thm: existence of polarization}). At the end, we assume that the components of $C$ are general members of their corresponding moduli spaces. We then give a complete characterization for $M_{L,V}$ to be $w$-semistable (Theorem \ref{thm: about (L,V)}), where $L$ is a line bundle on $C$ such that $\text{deg}(L_j)$ satisfies condition as in item (ii) of Theorem \ref{cor:  on strongly unstable}. Finally, assuming Butler's conjecture, we provide a complete characterization for $M_{E,V}$ to be $w$-semistable (Theorem \ref{The iff condition}) with the same condition on $\text{deg}(E_j)$ as in item (ii) of Theorem \ref{cor:  on strongly unstable}.

\section{Semistability of the restrictions}\label{section:2}
We begin by recalling the definition of a comb-like curve \cite[Example 2.3]{Brivio-Favale2023}. Let $N\geq 2$ be a positive integer and let $C$ be a complex reduced projective curve having $N$ smooth components $C_j$ of genus $g_j$ and $N-1$ nodes $p_j$ such that $C_i \cap C_j = \emptyset$ for $j \in \{1, \dots, N-1\}$, $i \notin \{j, N\}$ and $C_{j} \cap C_N = \{ p_j \}$, for $j \in \{1, \dots, N-1\}$. We call such a curve a comb-like curve with the fixed component $C_{N}$. Throughout this section, by $C$, we always mean a curve of the type mentioned here.

\noindent On such a curve, we have the following exact sequence:

 \begin{equation}\label{canonical exact sequence}
    0 \rightarrow \mathcal{O}_C \rightarrow \bigoplus_{j=1}^{N
    }\mathcal{O}_{C_j} \rightarrow \mathcal{T} \rightarrow 0,
 \end{equation}
 where $\mathcal{T}$ is supported only at the nodal point(s).
 Also, it's not hard to show that each $\mathcal{T}_{p_j}$ is a one dimensional vector space over $\mathbb{C}$ and that $h^0(\mathcal{T}) = (N-1)$.
 From the exact sequence \ref{canonical exact sequence}, we have 
\begin{eqnarray*}
    \chi(\mathcal{O}_C) & = & \sum_{j=1}^{N} \chi(\mathcal{O}_{C_j}) - \chi(\mathcal{T}) \nonumber \\
    & = & N - \sum_{j=1}^{N} g_j - (N-1)
    \nonumber \\
    & = & 1 - \sum_{j=1}^N g_j
\end{eqnarray*}
So if we let $p_a(C) = 1 - \chi(\mathcal{O}_C)$ be the arithmetic genus of $C$, then $p_a(C) = \sum\limits_{j=1}^{N} g_j$.

Now suppose $E$ is a vector bundle of rank $n$ on $C$, and $E_{|_{C_j}}$ is denoted by $E_j$. Then we have the short exact sequence
\begin{equation}\label{canonical sequence for the vector bundle E}
  0 \rightarrow E \rightarrow \bigoplus_{j=1}^{N} E_j \rightarrow \mathcal{T}_E \rightarrow 0,
\end{equation}
where $\mathcal{T}_E$ is a torsion sheaf supported only at the nodal points. We also have 
\begin{eqnarray}\label{euler char of vb of C}
\chi(E) = \sum_{j=1}^{N} \chi(E_j) - n(N-1).
\end{eqnarray}

\begin{definition}
 Let $F$ be a coherent sheaf of $\mathcal{O}_C$-modules. We call $F$ a pure sheaf of dimension one if for every proper $\mathcal{O}_C$-submodule  $G \subset F$ and $G \neq 0$, the dimension of support of $G$ is equal to one.
\end{definition}
Vector bundles on $C$ are examples of pure sheaves of dimension one. Suppose $F$ is a pure sheaf of dimension one on $C$. Let $F_j = \frac{F_{|_{C_j}}}{\text{Torsion}(F_{|_{C_j}})}$ for each $j$, where $\text{Torsion}(F_{|_{C_j}})$ is the torsion subsheaf of $F_{|_{C_j}}$. Then $F_j$, if non-zero, is torsion-free and hence locally free on $C_j$. Let $r_j$ denote the rank of $F_j$. Also let $d_j$ denote the degree of $F_j$ for each $j$. Then the $n$-tuples $(r_1,\dots,r_n)$ and $(d_1,\dots,d_n)$ respectively, are called the \textit{multirank} and \textit{multidegree} of $F$ (\cite[page 5]{Brivio-Favale2020}).

We now recall the definition of a polarization and semistability with respect to a polarization (see \cite{S} for more details).
\begin{definition}\label{polarization}
 Let $w = (w_1,\dots,w_n)$ be an $n$-tuple of rational numbers such that $0 < w_j < 1$ for each $j$ and $\sum\limits_{j=1}^n w_j =1$. Then such an $n$-tuple is called a polarization on $C$.
\end{definition}

\begin{definition}\label{polarized slope}
 Suppose $F$ is a pure sheaf of dimension one on $C$ of multirank $(r_1,\dots, r_n)$. Then the slope of $F$ with respect to a polarization $w$, denoted by $\mu_w(F)$, is defined by $\mu_w(F) = \frac{\chi(F)}{\sum\limits_{j=1}^n w_jr_j}$.
\end{definition}

\begin{definition} \label{defn: w-semistable and strongly unstable}
 Let $E$ be a vector bundle defined on $C$. Then $E$ is said to be $w$-semistable (resp. $w$-stable) if for any proper subsheaf $F \subset E$ one has $\mu_w(F) \leq \mu_w(E)$ (resp. $\mu_w(F) < \mu_w(E)$). If $E$ is not $w$-semistable for any polarization $w$ on $C$, then such an $E$ is called a strongly unstable bundle (\cite[Definition 1.4]{Brivio-Favale2020}).
\end{definition}

The following result appears in \cite[Theorem 1, Step-1]{Bigas91} in a more general form. For the case of a comb-like curve, the result is much simpler. So, we state and prove it here.

\begin{theorem}\label{Ineq:Teix-Bigas}
Let $E$ be a vector bundle of rank $n$ on $C$ which is semistable with respect to a polarization $w = (w_1, \dots , w_N)$. If $\chi_j$ and $\chi$ denote the Euler characteristics of $E_j$ and $E$ respectively, then we have 
\[ w_j \chi \leq \chi_j \leq w_j \chi + n, \]
for $j = 1, \dots , N-1$.
\begin{proof}
We know that $E_j (-p_j) \hookrightarrow E$, for $j = 1, \dots , N-1$. Then by using $w$-semistability of $E$,
\[
\begin{split}
\frac{\chi(E_j (-p_j))}{n w_j}  \leq \frac{\chi(E)}{n} & \Rightarrow \frac{\chi(E_j) -n}{n  w_j} \leq \frac{\chi(E)}{n}.
\end{split}
\]
Therefore,
\begin{eqnarray}\label{inq:1}
 \chi_j \leq w_j \chi + n.
\end{eqnarray}
\noindent
Now, let $\widetilde{E}_j$ denote the kernel of the map \[ \bigoplus_{\substack{i = 1 \\ i \neq j}}^{N-1} E_i \oplus E_N(-p_j) \rightarrow \tau_{\widetilde{E}_j} \rightarrow 0,
\] where $\tau_{\widetilde{E}_j}$ is the corresponding torsion sheaf. Then 
\[
\begin{split}
& \; \; \frac{\chi(\widetilde{E}_j)}{(\sum\limits_{\substack{i = 1 \\ i \neq j}}^N w_i)n}   \leq  \frac{\chi}{n}  \\
 \Rightarrow &  \; \; \chi(\widetilde{E}_j) \leq (1 - w_j) \chi \\
 \Rightarrow & \; \; \sum\limits_{\substack{i = 1 \\ i \neq j}}^N \chi_i - n (N-1) \leq (1 -w_j) \chi \\
 \Rightarrow & \; \;  \chi - \chi_j \leq \chi - w_j \chi 
\end{split}
\]
Therefore, we obtain 
\begin{eqnarray}\label{inq:2}
 \chi_j \geq w_j \chi.
\end{eqnarray}
Combining \eqref{inq:1} and \eqref{inq:2}, we have the desired result.
\end{proof}
\end{theorem}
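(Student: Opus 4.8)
The plan is to exploit the $w$-semistability of $E$ by testing it against two carefully chosen subsheaves whose $w$-slopes I can compute explicitly, each of which isolates the tooth $C_j$ from the remainder of the curve. Throughout I will use that $\mu_w(E) = \chi/n$, since $E$ has multirank $(n,\dots,n)$ and $\sum_i w_i = 1$. The upper and lower bounds will come from a sheaf supported \emph{on} $C_j$ and its complement supported \emph{off} $C_j$, respectively.

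For the upper bound I would first produce a subsheaf of $E$ living entirely on $C_j$. The natural candidate is $E_j(-p_j)$, embedded in $E$ via $E_j(-p_j)\hookrightarrow E$: a section of $E_j$ vanishing at the node extends by zero across the rest of $C$ to a genuine section of $E$. This subsheaf has multirank $n$ in the $j$-th slot and zero elsewhere, and Euler characteristic $\chi_j - n$, since twisting a rank-$n$ bundle down by one point drops $\chi$ by $n$. Feeding it into the semistability inequality $\mu_w(E_j(-p_j)) \leq \mu_w(E)$ yields $(\chi_j - n)/(n w_j) \leq \chi/n$, which rearranges at once to $\chi_j \leq w_j\chi + n$.

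For the lower bound I would instead use the complementary subsheaf $\widetilde{E}_j \hookrightarrow E$ supported on all components other than $C_j$, namely the kernel of the natural surjection of $\bigl(\bigoplus_{i\neq j,\, i\leq N-1} E_i\bigr)\oplus E_N(-p_j)$ onto its torsion at the surviving nodes. Its multirank is $n$ on every $C_i$ with $i\neq j$ and $0$ on $C_j$, so its $w$-denominator is $(1-w_j)n$. The decisive computation is its Euler characteristic: gluing the $N-2$ remaining teeth and the twisted spine $E_N(-p_j)$ across the $N-2$ surviving nodes gives $\chi(\widetilde{E}_j) = \sum_{i\neq j}\chi_i - n(N-1)$, which by \eqref{euler char of vb of C} collapses to $\chi - \chi_j$. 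Semistability $\mu_w(\widetilde{E}_j)\leq \mu_w(E)$ then reads $\chi(\widetilde{E}_j)\leq (1-w_j)\chi$, i.e.\ $\chi-\chi_j \leq \chi - w_j\chi$, hence $\chi_j \geq w_j\chi$; combining the two bounds finishes the argument.

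The arithmetic in the first step is harmless, so I expect the only genuine obstacle to sit in the second step, where one must exhibit $\widetilde{E}_j$ as an honest subsheaf of $E$ and keep accurate track of the node combinatorics. The subtle point is that deleting the tooth $C_j$ removes exactly one node $p_j$ (leaving $N-2$ to glue) while \emph{simultaneously} forcing the twist-down $E_N(-p_j)$ on the spine; it is precisely the interplay of these two effects that makes $\chi(\widetilde{E}_j)$ simplify to $\chi-\chi_j$ through the global formula \eqref{euler char of vb of C}. Getting this bookkeeping right is what makes the lower bound go through.
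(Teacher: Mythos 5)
Your proposal is correct and follows essentially the same route as the paper: the upper bound via the subsheaf $E_j(-p_j)\hookrightarrow E$, and the lower bound via the complementary subsheaf $\widetilde{E}_j$ built from $\bigl(\bigoplus_{i\neq j} E_i\bigr)\oplus E_N(-p_j)$, with the identical Euler characteristic bookkeeping $\chi(\widetilde{E}_j)=\sum_{i\neq j}\chi_i - n(N-1)=\chi-\chi_j$. No substantive differences.
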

We now state and prove the main results of this section.

\begin{theorem} \label{rank 2 necessary condition}
Let $E$ is a $w$-semistable vector bundle of rank 2 on $C$. Then for $j \in \{1,\ldots, N-1\}$, if $w_j \chi \notin \mathbb{Z}$, the following holds: 
\begin{enumerate}
    \item[(i)] If $w_j \chi + 1 < \chi_j < w_j \chi + 2$, then $E_j$ is semistable.
    \item[(ii)] If $\chi_j$ is even, then $E_j$ is semistable.
    \item[(iii)] If $E_j$ is not semistable, then the destabilizing line subbundle $L_j$ of $E_j$ has to satisfy $\chi(L_j) = \frac{\chi_j}{2} + \frac{1}{2}$.
\end{enumerate}
\begin{proof}
By the previous theorem, for $j = 1,\ldots, N-1$, we have 
\[
w_j \chi < \chi_j < w_j \chi + 2.
\] 

\noindent
(i) Suppose that $w_j \chi + 1 < \chi_j < w_j \chi + 2$, for some $ j \in \{ 1, \dots ,  N-1 \}$. Let $L_j \hookrightarrow E_j$ be a line subbundle. This implies that $L_j(-p_j)$ is a subsheaf of $E$, and so, by the $w$-semistability of $E$, we have
\[
\frac{\chi(L_j) - 1}{w_j} \leq \frac{\chi}{2}. 
\]
Then
\[
\begin{split}
\chi(L_j) & \leq \frac{w_j \chi}{2} +1 \\
          & < \frac{\chi_j -1}{2} + 1~~~~~~\text{(by the choice of $\chi_j$)} \\
          & = \frac{\chi_j}{2} + \frac{1}{2}.
\end{split}
\]
This implies that $\chi(L_j) \leq \frac{\chi_j}{2}$. Therefore, $E_j$ is semistable in this case. (If $\chi_j$ is even, then $E_j$ is stable as well, for, in that case, we have $\chi(L_j) < \frac{\chi_j}{2}$). \\

\noindent (ii) Now suppose that $\chi_j$ is even for some $j \in \{1,\dots,N-1\}$. If $\chi_j$ satisfies the inequalities $w_j \chi + 1 < \chi_j < w_j \chi + 2$, then by item (i), we are done with the proof. So, we suppose that $w_j \chi < \chi_j < w_j \chi + 1$. Then by the similar argument as above, we have 
\[ \chi(L_j) < \frac{\chi_j}{2} + 1,\]
and so, 
 \begin{equation} \label{subbundle violating semistability}
     \chi(L_j) \leq \frac{\chi_j}{2} + \frac{1}{2}. 
 \end{equation}
 
 As $\chi_j$ is even, we have $\chi(L_j) \leq \frac{\chi_j}{2}$, and so, $E_j$  is semistable. \\
 
 \noindent (iii) Suppose that $E_j$ is not semistable. This implies from items (i) and (ii) that $\chi_j$ is odd and that it satisfies $w_j \chi < \chi_j < w_j \chi + 1$. So, if $L_j$ is any subbundle of $E_j$, then by inequality \eqref{subbundle violating semistability}, we have $\chi(L_j) \leq \frac{\chi_j}{2} + \frac{1}{2}$. Now, as $E_j$ is not semistable, this means there must exist a destabilizing subbundle $L_j$ of $E_j$. From the above arguments, this implies that $\chi(L_j)$ has to be 
 equal to $\frac{\chi_j}{2} + \frac{1}{2}$.

\end{proof}
\end{theorem}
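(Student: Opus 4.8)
The plan is to leverage Theorem \ref{Ineq:Teix-Bigas} together with the integrality of Euler characteristics, converting every bound on a line subbundle of $E_j$ into a slope bound inside $E$ via the node twist. The first move is to sharpen Theorem \ref{Ineq:Teix-Bigas}: that result gives $w_j\chi \le \chi_j \le w_j\chi + 2$, but since $\chi_j \in \mathbb{Z}$ while $w_j\chi \notin \mathbb{Z}$ by hypothesis, neither endpoint is attained, so I obtain the strict chain $w_j\chi < \chi_j < w_j\chi + 2$. This strictness is the source of all the numerical room in the argument.

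Next I would set up the single estimate that drives all three parts. For any line subbundle $L_j \hookrightarrow E_j$, twisting down by the node yields an inclusion $L_j(-p_j) \hookrightarrow E$ whose multirank places a single $1$ in the $j$-th slot, so its polarized slope is $(\chi(L_j)-1)/w_j$, while $\mu_w(E) = \chi/2$ because $E$ has multirank $(2,\dots,2)$ and $\sum_i w_i = 1$. The $w$-semistability of $E$ then gives $(\chi(L_j)-1)/w_j \le \chi/2$, i.e.\ $\chi(L_j) \le \tfrac12 w_j\chi + 1$, valid for every line subbundle $L_j$.

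For part (i), substituting $w_j\chi < \chi_j - 1$ (which is the hypothesis $\chi_j > w_j\chi+1$) into this estimate gives $\chi(L_j) < \tfrac{\chi_j}{2}+\tfrac12$, and integrality of $\chi(L_j)$ forces $\chi(L_j) \le \tfrac{\chi_j}{2}$, which is exactly semistability of $E_j$. For part (ii) the only subcase not already covered by (i) is $w_j\chi < \chi_j < w_j\chi + 1$; there the estimate yields the uniform bound $\chi(L_j) \le \tfrac{\chi_j}{2}+\tfrac12$, and when $\chi_j$ is even, integrality collapses this to $\chi(L_j)\le \tfrac{\chi_j}{2}$. For part (iii) I would take the contrapositives of (i) and (ii): if $E_j$ is not semistable, then $\chi_j$ is odd and $w_j\chi < \chi_j < w_j\chi+1$, so the uniform bound $\chi(L_j)\le \tfrac{\chi_j}{2}+\tfrac12$ holds for every line subbundle. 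A destabilizing $L_j$ also satisfies $\chi(L_j) > \tfrac{\chi_j}{2}$, and since $\chi_j$ is odd the unique integer in $\bigl(\tfrac{\chi_j}{2},\,\tfrac{\chi_j}{2}+\tfrac12\bigr]$ is $\tfrac{\chi_j+1}{2}$, giving the claimed value $\chi(L_j)=\tfrac{\chi_j}{2}+\tfrac12$.

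Each step is short, and the geometric input is minimal (only the node twist $L_j(-p_j)\hookrightarrow E$ and the slope formula). The point I would watch most carefully is the repeated passage from a strict real inequality to an integer bound, which hinges on the parity of $\chi_j$ and on $w_j\chi\notin\mathbb{Z}$: were $w_j\chi$ an integer, the suppressed endpoints would return and the clean conclusions of (i) and (iii) would break down. Thus I expect the main obstacle to be bookkeeping the parity and integrality constraints correctly, rather than any deeper structural difficulty.
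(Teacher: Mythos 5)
Your proposal is correct and follows essentially the same route as the paper: the node twist $L_j(-p_j)\hookrightarrow E$ giving $\chi(L_j)\le \tfrac12 w_j\chi+1$, the strictening of Theorem \ref{Ineq:Teix-Bigas} via $w_j\chi\notin\mathbb{Z}$, and the same case split with integrality/parity arguments in each part. The only difference is that you make the passage from strict real bounds to integer bounds fully explicit, which the paper leaves partly implicit.
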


More generally, Let $E$ be a vector bundle of rank $n > 2$ on $C$. Under some extra assumptions on the Euler characteristics, we get a similar result as in the case of rank $2$.

\begin{theorem}\label{theorem for vb of rank n}
Let $E$ be a $w$-semistable vector bundle of rank $n$ on $C$. 
Then for $j \in \{1,\ldots, N-1\}$, if $w_j \chi \notin \mathbb{Z}$, the following holds:
\begin{enumerate}
    \item[(i)] If $n$ divides $\chi_j$ and $w_j \chi + (n-1) < \chi_j < w_j \chi + n$, then $E_j$ is semistable.
    \item[(ii)] If $n$ divides $\chi_j$, $E_j$ is not semistable and $L_j \subset E_j$ is a destabilizing subbundle of rank $k$, then $\chi(L_j) = \frac{k\chi_j}{n} + a$ for some $a \in \{1,\dots,k-1\}$. 
    \item[(iii)] If $E_j$ is not semistable and $L_j \subset E_j$ is a destabilizing subbundle of rank $k$ such that $k$ divides $\chi(L_j)$, then $\frac{\chi(L_j)}{k} = \frac{\chi_j}{n} + \frac{(n-r_j)}{n}$, where $r_j$ is the remainder obtained when $\chi_j$ is divided by $n$.
\end{enumerate}

\end{theorem}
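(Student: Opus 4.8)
The plan is to run, for each rank-$k$ subbundle $L_j \subseteq E_j$, the same mechanism used in Theorem \ref{rank 2 necessary condition}: exploit the inclusion $L_j(-p_j) \hookrightarrow E$ together with the $w$-semistability of $E$. As a subsheaf of $E$, the sheaf $L_j(-p_j)$ is supported on $C_j$ and has multirank with $k$ in the $j$-th slot and $0$ elsewhere, so its polarized slope is $\mu_w(L_j(-p_j)) = \frac{\chi(L_j)-k}{w_j k}$, while $\mu_w(E) = \frac{\chi}{n}$. Semistability of $E$ then gives $\chi(L_j) \leq \frac{k\,w_j\chi}{n} + k$. Since $w_j\chi \notin \mathbb{Z}$ while $\chi_j \in \mathbb{Z}$, Theorem \ref{Ineq:Teix-Bigas} upgrades $w_j\chi \leq \chi_j$ to the strict inequality $w_j\chi < \chi_j$, so this sharpens to the universal bound
\[
\chi(L_j) < \frac{k\chi_j}{n} + k, \qquad (\star)
\]
valid for every subbundle $L_j$ of every rank $k \in \{1,\dots,n-1\}$. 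I will also use that semistability of $E_j$ is exactly the condition $\chi(L_j)/k \leq \chi_j/n$ for all $L_j$, since the Euler-characteristic slope and the degree slope differ only by the constant $1-g_j$.

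For item (i) I would feed the stronger hypothesis $w_j\chi + (n-1) < \chi_j$, that is $w_j\chi < \chi_j - (n-1)$, back into the semistability inequality to obtain $\chi(L_j) < \frac{k\chi_j}{n} + \frac{k}{n}$. Because $n \mid \chi_j$ the quantity $\frac{k\chi_j}{n}$ is an integer, and because $k < n$ we have $\frac{k}{n} < 1$; as $\chi(L_j)$ is an integer this forces $\chi(L_j) \leq \frac{k\chi_j}{n}$, i.e. $\mu(L_j) \leq \mu(E_j)$. Since $L_j$ was an arbitrary proper subbundle, $E_j$ is semistable.

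For item (ii) the observation is that when $n \mid \chi_j$ the number $\frac{k\chi_j}{n}$ is an integer, so $(\star)$ reads $\chi(L_j) \leq \frac{k\chi_j}{n} + k - 1$; a destabilizing $L_j$ satisfies $\chi(L_j) > \frac{k\chi_j}{n}$, and these two integer bounds trap $\chi(L_j) = \frac{k\chi_j}{n} + a$ with $a \in \{1,\dots,k-1\}$. For item (iii) I would write $\chi_j = n q_j + r_j$ with $0 \leq r_j < n$ and combine the destabilizing inequality $\chi(L_j) > \frac{k\chi_j}{n}$ with $(\star)$; dividing by $k$ yields $q_j + \frac{r_j}{n} < \frac{\chi(L_j)}{k} < q_j + 1 + \frac{r_j}{n}$. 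Since $k \mid \chi(L_j)$ the middle term is an integer lying in a half-open unit-length window, and such a window contains the unique integer $q_j + 1$ precisely because $0 < r_j < n$; hence $\frac{\chi(L_j)}{k} = q_j + 1 = \frac{\chi_j}{n} + \frac{n-r_j}{n}$.

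Conceptually there is no obstacle beyond the rank-$2$ case: every step reduces to the single semistability inequality for $L_j(-p_j)$ followed by integrality bookkeeping. The step demanding the most care is the counting in (iii): one must check that the unit-length interval $\left(q_j + \tfrac{r_j}{n},\, q_j + 1 + \tfrac{r_j}{n}\right)$ contains exactly one integer, which hinges on $r_j$ being a genuine remainder with $0 < r_j < n$, and one must separately observe that the degenerate case $r_j = 0$ is vacuous (by (ii), divisibility $k \mid \chi(L_j)$ is then impossible). The other recurring hazard is tracking strict versus non-strict inequalities, where the hypothesis $w_j\chi \notin \mathbb{Z}$ is exactly what converts the bound from Theorem \ref{Ineq:Teix-Bigas} into the strict form needed throughout.
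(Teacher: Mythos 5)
Your proof is correct and follows essentially the same route as the paper's: the single $w$-semistability inequality applied to $L_j(-p_j)\hookrightarrow E$, sharpened to a strict bound via $w_j\chi\notin\mathbb{Z}$, followed by the same integrality bookkeeping in each of (i)--(iii). Your explicit observation that the case $r_j=0$ in (iii) is vacuous is a small point the paper leaves implicit, but it does not change the argument.
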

\begin{proof}
(i) Suppose that $n$ divides $\chi_j$ and $\chi_j$ satisfies 
\begin{eqnarray}\label{Rankn:inequality:1}
w_j \chi + (n-1) < \chi_j < w_j \chi + n, 
\end{eqnarray}
\text{for some} $j \in \{1, \dots , N-1\}$.
Let $L_j$ be a subbundle of $E_j$ of rank $k$. Then $L_j(-p_j)$ is a subsheaf of $E$. So, by $w$-semistability of $E$, we have 

\begin{eqnarray}\nonumber
    \frac{\chi(L_j) - k}{k\; w_j} &\leq& \frac{\chi}{n} \\ \label{Rankn:inequality:2}
    \Rightarrow \frac{\chi(L_j)}{k} &\leq& \frac{w_j\chi}{n} +1 \\ \nonumber
    &<& \frac{\chi_j}{n} + 1 -(\frac{n-1}{n}) \quad \quad \text{(from \eqref{Rankn:inequality:1})} \\ \nonumber
    \Rightarrow \chi(L_j) &<& \frac{k \chi_j}{n} + \frac{k}{n}.
\end{eqnarray}
As $\frac{k}{n} < 1$ and $n\mid \chi_j$, we have  $\frac{\chi(L_j)}{k} \le \frac{\chi_j}{n}$, thereby proving the semistability of $E_j$. \\

\noindent (ii) Again, suppose that $n$ divides $\chi_j$. If $E_j$ is not semistable, then by item (i), $\chi_j$ satisfies
\begin{eqnarray}\label{Rankn:inequality:3}
    w_j \chi  < \chi_j < w_j \chi + n-2.
\end{eqnarray}
As before, if $L_j$ is a subbundle of $E_j$ of rank $k$, we have 
\[
\chi(L_j) < \frac{k\chi_j}{n} + k, \quad \quad \text{(using \eqref{Rankn:inequality:2}, and \eqref{Rankn:inequality:3})}.
\]
Therefore the only way in which $L_j$ becomes a destabilizing subbundle for $E_j$ is when  
 \[
 \chi(L_j) = \frac{k \chi_j}{n} + a
 \]
 where $1 \le a \le k-1$. This completes the proof of (ii).\\

 \noindent (iii) Suppose that $E_j$ is not semistable and $L_j \subset E_j$ is a destabilizing subbundle of rank $k$ such that $k$ divides $\chi(L_j)$. Now, if $\chi_j = nq_j + r_j$ where $0\leq r_j \leq n-1$, then $\frac{\chi_j}{n} = q_j + \frac{r_j}{n}$. This means that if $L_j$ is a destabilizing subbundle of $E_j$, the only possibility for $\frac{\chi(L_j)}{k}$ (which is an integer), is $\frac{\chi_j}{n} + \frac{n - r_j}{n}$.

\end{proof}
Note that the proof of Theorem \ref{theorem for vb of rank n} gives more information than just the statement. More precisely, we have the following:
\begin{proposition}\label{line subbundle cant destabilize}
 Let $E$ be a w-semistable vector bundle of rank $n$ on $C$. Suppose that for some $j \in \{1,\dots,N-1\}$, $w_j\chi \notin \mathbb{Z}$, $n$ divides $\chi_j$ and $L_j \subset E_j$ is subbundle of rank $k$ such that $k$ divides $\chi(L_j)$. Then $L_j$ can't destabilize $E_j$. In particular, a line subbundle of $E_j$ can't destabilize $E_j$.
\end{proposition}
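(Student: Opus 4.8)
The plan is to extract the conclusion directly from the inequality chain already assembled in the proof of Theorem \ref{theorem for vb of rank n}, paired with the two divisibility hypotheses. The crucial point I would emphasize is that the middle inequality \eqref{Rankn:inequality:2}, namely $\frac{\chi(L_j)}{k} \leq \frac{w_j\chi}{n} + 1$, was obtained using only the inclusion $L_j(-p_j) \hookrightarrow E$ and the $w$-semistability of $E$; it does \emph{not} depend on any of the range restrictions on $\chi_j$ imposed in parts (i) and (ii) of that theorem. Hence it is valid for an arbitrary subbundle $L_j \subset E_j$ of rank $k$, which is exactly the generality the proposition requires.

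First I would feed in the lower bound $w_j \chi \leq \chi_j$ from Theorem \ref{Ineq:Teix-Bigas}. Since $w_j\chi \notin \mathbb{Z}$ while $\chi_j \in \mathbb{Z}$, this inequality is automatically strict, so $w_j\chi < \chi_j$, and substituting into \eqref{Rankn:inequality:2} yields
\[
\frac{\chi(L_j)}{k} \;\leq\; \frac{w_j\chi}{n} + 1 \;<\; \frac{\chi_j}{n} + 1.
\]
The next step is the only one where the divisibility assumptions enter, and it is an elementary rounding argument: because $n \mid \chi_j$ the number $\frac{\chi_j}{n}$ is an integer, and because $k \mid \chi(L_j)$ the number $\frac{\chi(L_j)}{k}$ is an integer. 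A strict inequality $m < M+1$ between two integers forces $m \leq M$, so we conclude $\frac{\chi(L_j)}{k} \leq \frac{\chi_j}{n}$. In the $\chi$-slope bookkeeping used throughout the paper (and equivalently for the usual degree-slope on the smooth component $C_j$, since the two differ only by the constant $1-g_j$), this says that $L_j$ does not exceed the slope of $E_j$, i.e.\ $L_j$ cannot destabilize $E_j$.

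Finally, for the ``in particular'' clause I would observe that a line subbundle has $k = 1$, for which the hypothesis $k \mid \chi(L_j)$ is vacuous; thus the statement applies to every line subbundle with no extra work. I do not anticipate a genuine obstacle here: the entire content lies in recognizing that \eqref{Rankn:inequality:2} holds unconditionally and in the rounding step made available by the two divisibility hypotheses, both of which are already implicit in the computations preceding this proposition.
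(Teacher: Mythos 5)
Your proof is correct and takes essentially the same route as the paper, which simply invokes ``the arguments of Theorem \ref{theorem for vb of rank n}'' to get $\frac{\chi(L_j)}{k} < \frac{\chi_j}{n} + 1$ and then applies the same integrality rounding step under the two divisibility hypotheses. You have only made explicit what the paper leaves implicit, namely that inequality \eqref{Rankn:inequality:2} holds for an arbitrary subbundle without the range restrictions of parts (i) and (ii), and that the strictness comes from $w_j\chi \notin \mathbb{Z}$.
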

\begin{proof}
    By the arguments of Theorem \ref{theorem for vb of rank n}, we have $\frac{\chi(L_j)}{k} < \frac{\chi_j}{n} + 1$. So, if $n$ divides $\chi_j$ and $k$ divides $\chi(L_j)$, then the above inequality implies that $\frac{\chi(L_j)}{k} \leq \frac{\chi_j}{n}$. In particular, this means that such a subbundle $L_j$ cannot destabilize $E_j$.
\end{proof}

\section{Application to kernel bundles} \label{Application to Kernel Bundles}
Let $N$ be a positive integer and $C$ be a comb-like curve with $N$ components $C_1,\dots,C_N$, where each $C_j$ has genus $g_j \geq 2$. Suppose that $E$ is a vector bundle of rank $n$ and $V \subseteq H^0(C,E)$ is a subspace of dimension $l > n$ such that $V$ generates $E$. Such a pair $(E, V)$ is called a generated pair on $C$. The kernel bundle $M_{E, V}$ on $C$ of the generated pair $(E, V)$ is defined by the following exact sequence:
\begin{eqnarray}\label{eq; definig kernel bundle}
0 \rightarrow M_{E,V} \rightarrow V \otimes \mathcal{O}_C \rightarrow E \rightarrow 0.
\end{eqnarray}
These notions are quite classical. When the base curve is smooth, they have been studied, for example, in \cite{paranjape-ramanan, Ein92}, and when the base curve is a certain type of reducible nodal curve, they appear in \cite{SSA, Brivio-Favale2020}.

We have the following exact sequence on $C$ for each $j \in \{ 1, \dots , N \}$:
\begin{eqnarray}\label{Exact sequence defining I_C_i}
0 \rightarrow \mathcal{I}_{C_j} \rightarrow \mathcal{O}_C \rightarrow \mathcal{O}_{C_j} \rightarrow 0,
\end{eqnarray}
where $\mathcal{I}_{C_j}$ is an ideal sheaf of $C_j$ in $C$. It is a sheaf such that 
\begin{center}
    $\mathcal{I}_{C_j} \supseteq  (\bigoplus\limits_{\substack{i=1 \\ i \neq j}}^{N-1} \mathcal{O}_{C_i}(-p_i)) \oplus \mathcal{O}_{C_N}(-p_1 - \cdots - p_{N-1}) \; \text{and}$ \\
    $ \mathcal{I}_{C_N} = \bigoplus\limits_{i=1}^{N-1} \mathcal{O}_{C_i}(-p_i)$.
\end{center}

If $(E, V)$ is a generated pair on $C$, then for each $j$, we have the natural morphisms $\rho_j : H^0(C, E) \rightarrow H^0(C_j, E_j)$ defined by $\rho_j(s) = s_{\mid_{C_j}}$. We denote $\rho_j(V)$ by $V_j$ for each $j$. Tensoring \eqref{Exact sequence defining I_C_i} by $E$, we have 
\begin{center} \label{Exact sequence defining I_C_i tensor E}
 $0 \rightarrow E \otimes \mathcal{I}_{C_j} \rightarrow E \rightarrow E_j \rightarrow 0.$
\end{center}
Now, passing on to the long exact sequence in the cohomology, we obtain
\begin{center}
$0 \rightarrow H^0(E \otimes \mathcal{I}_{C_j}) \rightarrow H^0(E) \xrightarrow{\rho_j} H^0(E_j) \rightarrow \cdots$
\end{center}
This implies, in particular, 
\begin{center}\label{ker:N}
$\text{ker}(\rho_{{N}_{|_V}}) = \bigoplus\limits_{i=1}^{N-1} (V \cap H^0(E_i(-p_i))),$
\end{center}
and for $j \in \{1,\dots, N-1 \}$, 
\begin{center}\label{ker:j<N}
$\text{ker}(\rho_{j_{|_V}}) \supseteq  \big( \bigoplus\limits_{\substack{i = 1 \\ i \neq j}}^{N-1} (V \cap H^0(E_i(-p_i))) \big) \oplus (V \cap H^0(E_N(-p_1-\cdots -p_{N-1}))).$
\end{center}

It is not hard to see that if $(E, V)$ is a generated pair on $C$, then $(E_j, V_j)$ is a generated pair on $C_j$ (proof of this is similar to \cite[Lemma 3.1]{SSA}). Further, we have the following commutative diagram:
\[
\begin{tikzcd}
  & 0 \arrow[d, ]  & 0 \arrow[d, ]  & 0 \arrow[d, ]  \\
   0 \arrow[r] & {Ker(\rho_{j_{|_V}})} \otimes \mathcal{O}_{C_j} \arrow[equal, ]{d} \arrow[r, ] & M_{E,V}\otimes \mathcal{O}_{C_j} \arrow[d, ] \arrow[r] & M_{E_j, V_j} \arrow[d, ] \arrow[r] & 0 \\
    0 \arrow[r] & {Ker(\rho_{j_{|_V}})} \otimes \mathcal{O}_{C_j} \arrow[d, ] \arrow[r, ] & V\otimes \mathcal{O}_{C_j} \arrow[d, ] \arrow[r,] & V_j \otimes \mathcal{O}_{C_j} \arrow[d, ] \arrow[r] & 0 \\
     & 0  \arrow[r, ] & E_j \arrow[d, ] \arrow[equal]{r} & E_j \arrow[d, ] \arrow[r] & 0\\
  & &0  & 0   
\end{tikzcd}
\] 

From \eqref{eq; definig kernel bundle}, it is clear that $\text{rk}(M_{E, V}) = l -n > 0$ and $\deg(M_{E,V}) = -d$, where $d = \sum\limits_{j =1}^N\deg(E_j)$. Similarly, if we let $d_j = \deg(E_j)$, then $\deg (M_{E_j, V_j}) = \deg(M_{E,V} \otimes \mathcal{O}_{C_j}) = -d_j$. We shall denote $\chi(M_{E,V})$ by $\chi$ and $ \chi(M_{E,V}) \otimes \mathcal{O}_{C_j}$ by $\chi_j$  throughout this section.\\

\begin{proposition}\label{prop: on unstable bundle}
Let $(E, V)$ be a generated pair on $C$. If $\ker({\rho_j}_{\mid_V}) \neq 0$ for some $j$, then $M_{E, V} \otimes \mathcal{O}_{C_j}$ is unstable.

\begin{proof}
As $\deg(\ker ({\rho_j}_{\mid_V}) \otimes \mathcal{O}_{C_j}) = 0$, $\mu(\ker ({\rho_j}_{\mid_V}) \otimes \mathcal{O}_{C_j}) = 0$. But 
\[
\mu(M_{E, V} \otimes \mathcal{O}_{C_j}) = \frac{-d_j}{l-n} <0.
\]
This implies that $\mu(\ker ({\rho_j}_{\mid_V}) \otimes \mathcal{O}_{C_j}) > \mu(M_{E, V} \otimes \mathcal{O}_{C_j})$. Therefore, $M_{E, V} \otimes \mathcal{O}_{C_j}$ is unstable if $\ker({\rho_j}_{\mid_V}) \neq 0$.
\end{proof}
\end{proposition}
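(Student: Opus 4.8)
The plan is to extract a destabilizing subsheaf directly from the top row of the commutative diagram displayed above, which is the short exact sequence
\[
0 \rightarrow \ker({\rho_j}_{\mid_V}) \otimes \mathcal{O}_{C_j} \rightarrow M_{E,V} \otimes \mathcal{O}_{C_j} \rightarrow M_{E_j, V_j} \rightarrow 0.
\]
Under the hypothesis $\ker({\rho_j}_{\mid_V}) \neq 0$, the leftmost term is a nonzero subsheaf of the restricted kernel bundle $M_{E,V} \otimes \mathcal{O}_{C_j}$, and this is the candidate that will violate semistability.

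First I would record that this subsheaf is trivial. Indeed $\ker({\rho_j}_{\mid_V}) \otimes \mathcal{O}_{C_j}$ is a direct sum of copies of $\mathcal{O}_{C_j}$, so $\deg(\ker({\rho_j}_{\mid_V}) \otimes \mathcal{O}_{C_j}) = 0$ and hence $\mu(\ker({\rho_j}_{\mid_V}) \otimes \mathcal{O}_{C_j}) = 0$. Next I would compute the slope of the ambient bundle from the numerical data already recorded, namely $\deg(M_{E,V} \otimes \mathcal{O}_{C_j}) = -d_j$ and $\text{rk}(M_{E,V}) = l - n$, giving
\[
\mu(M_{E,V} \otimes \mathcal{O}_{C_j}) = \frac{-d_j}{l-n}.
\]
Comparing the two slopes then yields $\mu(\ker({\rho_j}_{\mid_V}) \otimes \mathcal{O}_{C_j}) = 0 > \mu(M_{E,V} \otimes \mathcal{O}_{C_j})$, so a nonzero subsheaf has strictly larger slope and $M_{E,V} \otimes \mathcal{O}_{C_j}$ is unstable.

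The only delicate point, and the step I would be most careful about, is the strict inequality $\frac{-d_j}{l-n} < 0$, i.e. that $d_j > 0$. Since $E$ is globally generated, so is $E_j$, whence $d_j = \deg(E_j) \geq 0$; the borderline case $d_j = 0$ would force $E_j \cong \mathcal{O}_{C_j}^{\oplus n}$ on the genus $g_j \geq 2$ component $C_j$, which collapses $M_{E_j,V_j}$ to zero and makes $M_{E,V} \otimes \mathcal{O}_{C_j}$ itself the trivial (hence semistable) bundle $\ker({\rho_j}_{\mid_V}) \otimes \mathcal{O}_{C_j}$. So the real content is to ensure this degenerate situation is excluded; once $d_j > 0$ holds, the slope comparison above closes the argument immediately.
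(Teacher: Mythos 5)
Your argument is the same as the paper's: the trivial subsheaf $\ker({\rho_j}_{\mid_V}) \otimes \mathcal{O}_{C_j}$ of slope $0$ sits inside $M_{E,V} \otimes \mathcal{O}_{C_j}$, which has slope $\frac{-d_j}{l-n}$, and the comparison gives instability. Your added caution about the borderline case $d_j = 0$ is warranted and goes beyond the paper, which silently assumes $d_j > 0$: as you observe, if $E_j$ were trivial then $M_{E,V} \otimes \mathcal{O}_{C_j}$ would itself be trivial and hence semistable, so the statement implicitly requires $d_j > 0$ (equivalently, that $E_j$ is not the trivial bundle), and neither your write-up nor the paper's actually rules this out from the stated hypotheses.
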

\begin{remark} \label{crucial remark after Proposition 3.1}
   Note that if $\ker ({\rho_j}_{\mid_V}) \neq 0$ for some $j \in \{1,\dots,N-1\}$, then by Proposition \ref{line subbundle cant destabilize}, $(l-n)$ cannot divide $\chi_j$ for that particular $j$.  
\end{remark}
We need the following result for our next theorem. 
\begin{theorem}[{\cite[Theorem 1]{Sierra09}}] \label{degree bound for gg vb}
    Let $E$ be a globally generated vector bundle of rank $e \geq 2$ on a smooth projective variety $X$ and let $L := \det (E)$. If $\deg (E) > 0$ and $E$ is not isomorphic to $\mathcal{O}_X^{e-1} \oplus L$, then 
    \[
    \deg (E) \geq  h^0(X, E) - e.
    \]
Moreover, if equality holds then $\deg(L) = h^0(X, L)- n$ and, in particular, $X$ is rational.
\end{theorem}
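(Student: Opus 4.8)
The plan is to reduce the statement to the rank-one case and then invoke the classical theory of varieties of minimal degree. Since $E$ is globally generated of rank $e \geq 2$, I would first produce a rank-one quotient by choosing $e-1$ general global sections $s_1,\dots,s_{e-1}$. These assemble into a sheaf map $\mathcal{O}_X^{e-1} \to E$, and because the evaluation $H^0(E)\otimes\mathcal{O}_X \to E$ is surjective, a general such choice realizes the generic degeneracy behaviour. On a curve the locus where the $s_i$ become dependent has expected codimension two and is therefore empty, so
\[
0 \to \mathcal{O}_X^{e-1} \to E \to L \to 0
\]
is a genuine exact sequence of bundles with $L = \det E$ the rank-one quotient, and $L$ is globally generated as a quotient of $E$. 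Taking cohomology gives $h^0(E) \le (e-1) + h^0(L)$, i.e. $h^0(L) \ge h^0(E) - (e-1)$, while additivity of degree gives $\deg E = \deg L$.

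The base case is the line-bundle bound. A globally generated line bundle $L$ with $\deg L > 0$ defines a finite morphism $\phi_L$ onto a nondegenerate irreducible subvariety $Y \subseteq \mathbb{P}^{h^0(L)-1}$ of dimension $\dim X$. I would then quote the classical minimal-degree inequality, namely that a nondegenerate irreducible $Y^m \subseteq \mathbb{P}^N$ satisfies $\deg Y \ge N-m+1$, together with $\deg L = \deg(\phi_L)\cdot\deg Y \ge \deg Y$, to conclude $\deg L \ge h^0(L) - \dim X$. For a curve this reads $\deg L \ge h^0(L) - 1$, so combining with the reduction yields
\[
\deg E = \deg L \ge h^0(L) - 1 \ge h^0(E) - e,
\]
which is exactly the claim.

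For the equality clause I would trace back when each inequality is tight: equality forces $\deg L = h^0(L) - \dim X$, so the image $Y$ is a variety of minimal degree, and forces $\deg\phi_L = 1$, so $\phi_L$ is birational onto $Y$. Varieties of minimal degree are rational (rational normal scrolls, the Veronese surface, quadrics, and cones over these), whence $X$ is rational; for a curve $Y$ is a rational normal curve $\cong \mathbb{P}^1$ and $\phi_L$ is an isomorphism, so $X \cong \mathbb{P}^1$. The excluded bundle $E \cong \mathcal{O}_X^{e-1}\oplus L$ is exactly the degenerate case where the rank-one reduction carries no information beyond the split summand and where, in dimension $\ge 2$, the asserted bound genuinely fails (e.g. $\mathcal{O}\oplus\mathcal{O}(1)$ on $\mathbb{P}^2$); ruling it out is what makes the reduction faithful.

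The hard part will be making the rank-one reduction work in dimension $\ge 2$. There a general section vanishes in codimension $e$, so the $e-1$ sections degenerate along a codimension-two locus and the quotient $Q$ is no longer a line bundle but a rank-one sheaf with torsion and a base locus; hence $Q$ need not be globally generated, $h^0(Q)$ and $\deg Q$ no longer agree with those of $L = \det E$ on the nose, and the naive chain of inequalities only delivers the weaker bound $\deg E \ge h^0(E) - (e-1) - \dim X$. Recovering the sharp constant $-e$ in higher dimension requires controlling this torsion and base-locus defect, for instance by passing to the saturation of $\mathcal{O}_X^{e-1}$, bounding the length of the degeneracy scheme, or cutting down to a general complete-intersection curve while checking that restriction preserves $h^0$; this bookkeeping is the technical crux. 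For the application in the present paper, however, only the curve case is needed, and there the argument above is complete.
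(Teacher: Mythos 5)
This statement is an imported result: the paper quotes it directly from Sierra's article (\cite{Sierra09}, Theorem~1) and supplies no proof of its own, so there is no internal argument to compare yours against. Judged on its own terms, your reconstruction of the one-dimensional case is correct and is the standard argument: for a globally generated rank-$e$ bundle on a smooth curve, $e-1$ general sections give an exact sequence $0 \to \mathcal{O}_X^{e-1} \to E \to L \to 0$ with $L = \det E$ globally generated (the degeneracy locus has expected codimension two, hence is empty for a general choice by the usual Bertini-type argument for globally generated bundles), whence $h^0(L) \ge h^0(E) - (e-1)$; the minimal-degree bound for the nondegenerate image of $\phi_{|L|}$ then gives $\deg L \ge h^0(L) - 1$ (trivially true if $h^0(L)\le 1$, which in any case cannot happen when $\deg L > 0$ and $L$ is base-point free). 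Your analysis of the equality case --- $\phi_{|L|}$ birational onto a rational normal curve, hence $X \cong \mathbb{P}^1$ --- is also right, and you correctly observe that on a curve the hypothesis $E \ncong \mathcal{O}_X^{e-1}\oplus L$ is never used; this matches the remark the authors place immediately after the theorem. (The symbol $n$ in the equality clause is $\dim X$ in Sierra's notation, not the rank $n$ used elsewhere in this paper; your reading is the intended one.)

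The only caveat is that the statement as displayed concerns an arbitrary smooth projective variety, and your argument does not prove that: as you yourself flag, in dimension $\ge 2$ the quotient by $e-1$ general sections acquires torsion along a codimension-two degeneracy locus, and the naive bookkeeping only yields the weaker constant $-(e-1)-\dim X$. Controlling that defect is the actual content of Sierra's proof, and your example $\mathcal{O}\oplus\mathcal{O}(1)$ on $\mathbb{P}^2$ correctly shows the excluded split bundle is a genuine obstruction there. Since the paper only ever applies the bound to the smooth components $C_j$, which are curves, this gap is harmless for the application; but if your write-up is meant to stand as a proof of the theorem as stated, you must either restrict the statement to curves or supply the higher-dimensional argument (or, as the authors do, simply cite Sierra).
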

Note that the above mentioned bound on $\deg (E)$ holds even when $E$ is isomorphic to $\mathcal{O}_X^{e-1} \oplus L$, provided that $X$ is a curve. This is evident from the discussion preceding Theorem 1 of \cite{Sierra09}.
Now, as an application of Proposition \ref{prop: on unstable bundle}, Theorems \ref{rank 2 necessary condition} and \ref{theorem for vb of rank n}, we have the following result:
\begin{theorem}\label{cor:  on strongly unstable}
Let $(E, V)$ be a generated pair on $C$. Then the following holds:
\begin{enumerate}
    \item[(i)] If $\text{rank}(M_{E,V}) = 2$, and $\ker ({\rho_j}_{\mid_V}) \neq 0$ for some $j$, then $M_{E,V}$ is strongly unstable.
    \item[(ii)] If $\text{rank}(M_{E,V}) = (l-n) > 2$, $\ker ({\rho_j}_{\mid_V}) \neq 0$ for some $j \in \{1,\dots,N-1\}$ and $d_j \neq (l-n) - r_j$ for that particular $j$ (where $r_j$ is the reminder obtained when $\chi_j$ is divided by (l-n) and $r_j > 0$), then $M_{E, V}$ is strongly unstable.
\end{enumerate}

\begin{proof} 
Suppose that $M_{E,V}$ is semistable with respect to a polarization $w$. We consider each case separately and arrive at a contradiction. First, observe that if $\ker ({\rho_N}_{\mid_V}) \neq 0$, it will imply that $V \cap H^0(E_i(-p_i)) \neq 0$ for some $i \in \{1,\dots,N-1\}$, which in turn implies that $\ker ({\rho_j}_{\mid_V}) \neq 0$ for all $j \neq i$. So, without loss of generality, we assume that $\ker ({\rho_j}_{\mid_V}) \neq 0$ for some $j \in \{1,\dots, N-1\}$ in item (i) as well. \\

(i) As $\ker ({\rho_j}_{\mid_V}) \neq 0$, by Proposition \ref{prop: on unstable bundle}, the bundle $\ker ({\rho_j}_{\mid_V}) \otimes \mathcal{O}_{C_j}$ becomes a destabilizing subbundle for $M_{E, V} \otimes \mathcal{O}_{C_j}$. However, by Theorem \ref{rank 2 necessary condition}(iii), the only way in which $M_{E, V} \otimes \mathcal{O}_{C_j}$ violates semistability is when there exists a line subbundle $L_j$ of $M_{E, V} \otimes \mathcal{O}_{C_j}$ such that $\chi(L_j) = \frac{\chi_j}{2} + \frac{1}{2}$. This means that 
    \begin{eqnarray*}
        \chi(\ker ({\rho_j}_{\mid_V}) \otimes \mathcal{O}_{C_j}) & = & \frac{\chi_j}{2} + \frac{1}{2} \\
       \Rightarrow (1-g_j) & = & \frac{-d_j + 2 (1-g_j)}{2} + \frac{1}{2}, 
    \end{eqnarray*}
    which forces $d_j =1$. We claim that if $d_j =1$, then $E_j$ can't be globally generated. This contradicts our choice of $E_j$, thereby completing the proof. 

    To prove the claim, suppose that $d_j = 1$. If $E_j$ is globally generated, then by  Theorem \ref{degree bound for gg vb}, we have $d_j \geq h^0(E_j) - n$. This implies $h^0(E_j) \leq n+1$. As $E_j$ is globally generated, this further implies that $h^0(E_j) = n~\text{or}~n+1$. If it is $n$, it forces $E_j$ to be isomorphic
    to trivial sheaf of rank $n$, which is a contradiction as their degrees won't match. If $h^0(E_j) =n+1$, then $1=d_j=h^0(E_j) - n$, which, again by Theorem \ref{degree bound for gg vb}, implies $C_j$ is rational. But $C_j$ has genus $g_j \geq 2$. So, this also is not possible. Therefore, $E_j$ can not be globally generated if it has degree equal to $1$. \\

(ii) Again as $\ker ({\rho_j}_{\mid_V}) \neq 0$, if $k$ denotes $\text{dim}(\ker ({\rho_j}_{\mid_V}))$, by Theorem \ref{theorem for vb of rank n}(iii), we have
     \begin{eqnarray*}
        \frac{\chi(\ker ({\rho_j}_{\mid_V}) \otimes \mathcal{O}_{C_j})}{k} & = &  \frac{\chi_j}{(l-n)} + \frac{(l-n-r_j)}{(l-n)} \\
        \Rightarrow (1-g_j) & = & \frac{-d_j + (l-n)(1-g_j)}{l-n} + \frac{(l-n-r_j)}{(l-n)},
    \end{eqnarray*}
    which forces $d_j = l-n-r_j$. This is a contradiction to the given hypothesis. 
\end{proof}
\end{theorem}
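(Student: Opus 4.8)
The plan is to argue by contradiction: I assume $M_{E,V}$ is $w$-semistable for some polarization $w$ and derive a contradiction under the hypotheses of each part. A preliminary reduction lets me assume the offending index lies in $\{1,\dots,N-1\}$. Indeed, since $\ker({\rho_N}_{\mid_V})=\bigoplus_{i=1}^{N-1}(V\cap H^0(E_i(-p_i)))$, a nonzero kernel at $j=N$ yields some $i\in\{1,\dots,N-1\}$ with $V\cap H^0(E_i(-p_i))\neq 0$; the displayed containment for $\ker({\rho_j}_{\mid_V})$ then shows $\ker({\rho_j}_{\mid_V})\neq 0$ for every $j\neq i$ in $\{1,\dots,N-1\}$. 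So in both parts I work with a fixed $j\in\{1,\dots,N-1\}$ having $\ker({\rho_j}_{\mid_V})\neq 0$.

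The heart of the argument is to exhibit a concrete destabilizing subbundle of the restriction $M_{E,V}\otimes\mathcal{O}_{C_j}$ and then pin down its Euler characteristic in two different ways. By Proposition \ref{prop: on unstable bundle}, the trivial, degree-zero bundle $\ker({\rho_j}_{\mid_V})\otimes\mathcal{O}_{C_j}$ destabilizes $M_{E,V}\otimes\mathcal{O}_{C_j}$; the commutative diagram of Section \ref{Application to Kernel Bundles} identifies its quotient as the locally free sheaf $M_{E_j,V_j}$, so this subsheaf is saturated, hence a genuine subbundle. I then apply the structural results of Section \ref{section:2} to the $w$-semistable bundle $M_{E,V}$, with $\chi=\chi(M_{E,V})$ and $\chi_j=\chi(M_{E,V}\otimes\mathcal{O}_{C_j})=-d_j+(l-n)(1-g_j)$. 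In the rank-two case Theorem \ref{rank 2 necessary condition}(iii) forces the destabilizing line subbundle to have Euler characteristic $\tfrac{\chi_j}{2}+\tfrac12$; in the higher-rank case Theorem \ref{theorem for vb of rank n}(iii) applies because $k:=\dim\ker({\rho_j}_{\mid_V})$ divides $\chi(\ker({\rho_j}_{\mid_V})\otimes\mathcal{O}_{C_j})=k(1-g_j)$, and it forces $\tfrac{1}{k}\chi(\ker({\rho_j}_{\mid_V})\otimes\mathcal{O}_{C_j})=\tfrac{\chi_j}{l-n}+\tfrac{(l-n)-r_j}{l-n}$.

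Comparing these identities with the actual value $k(1-g_j)$ is then a short computation. In part (ii) it forces $d_j=(l-n)-r_j$, contradicting the hypothesis and completing that case. In part (i) it forces $d_j=1$, which is not in itself absurd, so here I must separately rule out a degree-one globally generated bundle $E_j$ of rank $n$ on a curve of genus $g_j\geq 2$, contradicting that $(E_j,V_j)$ is a generated pair. The plan for this is to feed $d_j=1>0$ into Sierra's bound (Theorem \ref{degree bound for gg vb}) to get $h^0(E_j)\leq n+1$; global generation then forces $h^0(E_j)\in\{n,n+1\}$, where the value $n$ would make $E_j$ trivial (impossible since $d_j=1$) and the value $n+1$ realizes the equality case of Sierra's theorem, which would make $C_j$ rational, against $g_j\geq 2$.

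I expect the genuine obstacle to be exactly this last exclusion in part (i): whereas in part (ii) the hypothesis on $d_j$ was tailored to clash with the value forced by semistability, in part (i) the contradiction must be imported from outside the semistability machinery, resting on the sharp degree inequality for globally generated bundles and essentially on $g_j\geq2$. A secondary point I would check carefully is the applicability of Theorems \ref{rank 2 necessary condition} and \ref{theorem for vb of rank n}, whose hypothesis $w_j\chi\notin\mathbb{Z}$ must be verified for the polarization at hand; here Remark \ref{crucial remark after Proposition 3.1} already guarantees $(l-n)\nmid\chi_j$ (so $r_j>0$), and I would likewise isolate the degenerate case $\dim V_j=n$, in which $M_{E_j,V_j}$ vanishes, so that the destabilizing subbundle exhibited above is indeed proper.
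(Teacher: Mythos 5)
Your proposal follows essentially the same route as the paper's proof: reduce to $j\in\{1,\dots,N-1\}$ via the description of $\ker({\rho_N}_{\mid_V})$, use Proposition \ref{prop: on unstable bundle} to exhibit $\ker({\rho_j}_{\mid_V})\otimes\mathcal{O}_{C_j}$ as a destabilizing subbundle, pin down its Euler characteristic via Theorem \ref{rank 2 necessary condition}(iii) (resp.\ Theorem \ref{theorem for vb of rank n}(iii)) to force $d_j=1$ (resp.\ $d_j=(l-n)-r_j$), and dispose of $d_j=1$ with Sierra's bound and $g_j\geq 2$. The one caveat you raise --- that the hypothesis $w_j\chi\notin\mathbb{Z}$ of Theorems \ref{rank 2 necessary condition} and \ref{theorem for vb of rank n} needs to be verified for the assumed polarization --- is a legitimate point that the paper's own proof also passes over without comment, so it is worth recording rather than a defect of your plan relative to theirs.
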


From Theorem \ref{cor:  on strongly unstable}, it is clear that for $M_{E,V}$ to be semistable with respect to a polarization $w$, it is reasonable to start with the assumption that $\ker ({\rho_j}_{\mid_V}) =0$ for each $j$. The following theorem says that we need a little more than that to ascertain the existence of such a polarization.

\begin{theorem}\label{thm: existence of polarization}
Let $(E,V)$ be a generated pair on $C$ and $M_{E,V}$ the associated kernel bundle. If $M_{E, V} \otimes \mathcal{O}_{C_j}$ is semistable for each $j$, 
then there exists a polarization $w = (w_1, \dots , w_N)$ such that $M_{E, V}$ is $w$-semistable.

\begin{proof}
Suppose that there exists a polarization $w= (w_1, \dots , w_N)$ satisfying the inequalities 
\[
w_j \chi < \chi_j < w_j \chi + l- n, 
\]
for $j \in \{ 1, \dots , N-1 \}$, where $\chi = \chi(M_{E, V})$ and $\chi_j = \chi (M_{E, V} \otimes \mathcal{O}_{C_j})$. Then since $M_{E, V} \otimes \mathcal{O}_{C_j}$ 
is semistable for each $j$, by \cite[Theorem 1, Step-2]{Bigas91}, there exists a polarization $w= (w_1, \dots , w_N)$ such that $M_{E, V}$ is $w$-semistable. So, to 
complete the proof, we prove the existence of a polarization $w$ satisfying the above mentioned inequalities. As $M_{E, V} \otimes \mathcal{O}_{C_j}$ is a subbundle 
of the trivial bundle $V \otimes \mathcal{O}_{C_j}$ for each $j$, we have $\chi_j < 0$ for each $j$. Also, since $\chi = \sum\limits_{j =1}^N \chi_j - (l-n)(N-1)$, we conclude that for each $j$, $0 < \frac{\chi_j}{\chi} <1.$

Now, we will first prove the existence of $w_1 \in (0, 1) \cap \mathbb{Q}$ such that 
\begin{equation}\label{inequalities for w1}
        \begin{cases}
         \chi_1 < w_1 \chi + l-n \\
         \chi_1 > w_1 \chi.
        \end{cases}
  \end{equation}
  The inequalities (\ref{inequalities for w1}) are equivalent to
  \begin{equation}\label{another form of inequalities for w1}
    \chi_1 - (l-n) < w_1  \chi < \chi_1.  
  \end{equation}
  Multiplying the inequalities (\ref{another form of inequalities for w1}) by $\frac{1}{\chi}$ and using the hypothesis that $\chi < 0$, we get 
  \begin{equation*}\label{inequalities for w0}
     \frac{\chi_1}{\chi} < w_1 < \frac{\chi_1 - (l-n)}{\chi}.
  \end{equation*}
We know that both $\frac{\chi_1 - (l-n)}{\chi}$ and $\frac{\chi_1}{\chi}$ are rational numbers lying between 0 and 1. So it is always possible to choose $w_1 \in (0,1) \cap \mathbb{Q}$ satisfying inequalities (\ref{inequalities for w1}).
In exactly the same way, for each $j \in \{1,\dots,N-1\}$,  we can choose $w_j$ such that  
\begin{equation*}\label{inequalities for wj}
     \frac{\chi_j}{\chi} < w_j <\frac{\chi_j - (l-n)}{\chi}.
\end{equation*}
Adding all these inequalities, we obtain
    \begin{equation*}
        \frac{\sum\limits_{j=1}^{N-1}\chi_j}{\chi} < \sum\limits_{j=1}^{N-1} w_j  < \frac{\sum\limits_{j=1}^{N-1}\chi_j - (l-n)(N-1)}{\chi}.  
    \end{equation*}
   This implies $\sum\limits_{j=1}^{N-1} w_j \in (0, 1) \cap \mathbb{Q}$ as well. Finally, defining $w_N = 1 - \sum\limits_{j = 1}^{N-1} w_j$, we get the required polarization.
\end{proof} 
\end{theorem}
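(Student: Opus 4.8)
The plan is to reduce the statement to a purely numerical existence problem and then solve it by an explicit interval construction. The one nontrivial external ingredient is \cite[Theorem 1, Step-2]{Bigas91}, whose comb-like specialization (in the style of Theorem \ref{Ineq:Teix-Bigas}, now with $\mathrm{rank} = l-n$) says: if each restriction $M_{E,V}\otimes\mathcal{O}_{C_j}$ is semistable and the Euler characteristics satisfy
\[
w_j\chi < \chi_j < w_j\chi + (l-n), \qquad j = 1,\dots,N-1,
\]
then $M_{E,V}$ is $w$-semistable. Since semistability of the restrictions is part of the hypothesis, the whole theorem reduces to producing a single polarization $w=(w_1,\dots,w_N)$ realizing this system of inequalities.

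First I would isolate the two sign facts that drive the construction. Reading off the middle column of the displayed commutative diagram, $M_{E,V}\otimes\mathcal{O}_{C_j}$ embeds as a subbundle of the trivial bundle $V\otimes\mathcal{O}_{C_j}$, hence has nonpositive degree; together with $g_j\geq 2$ this forces $\chi_j<0$ for every $j$. Feeding this into the gluing relation $\chi=\sum_{j=1}^N\chi_j-(l-n)(N-1)$ gives $\chi<0$, and a short check (using that every $\chi_i<0$) yields the normalization $0<\chi_j/\chi<1$ for each $j$. These are exactly the inequalities that will keep the admissible weights inside $(0,1)$.

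Next I would solve the constraints one component at a time. Rewriting the $j$-th inequality as $\chi_j-(l-n)<w_j\chi<\chi_j$ and dividing by $\chi<0$ (which reverses the inequalities) gives
\[
\frac{\chi_j}{\chi} < w_j < \frac{\chi_j-(l-n)}{\chi},
\]
an interval whose endpoints both lie in $(0,1)$ by the previous step, so a rational $w_j$ can be chosen there for each $j\in\{1,\dots,N-1\}$. The only thing left is to verify that these choices assemble into a genuine polarization, i.e.\ that $w_N:=1-\sum_{j=1}^{N-1}w_j$ is a positive rational. I would obtain this by summing the $N-1$ interval inequalities: the resulting bounds on $\sum_{j=1}^{N-1}w_j$ both lie in $(0,1)$, the crucial point being that the gluing relation collapses the sum of the upper endpoints to $1-\chi_N/\chi<1$, which keeps $\sum_{j=1}^{N-1}w_j$ strictly below $1$ and hence makes $w_N>0$.

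The step I expect to require the most care is this last compatibility check, rather than any individual estimate: each $w_j$ lives in its own interval, but the total must simultaneously avoid reaching $1$. This is precisely where the normalization $\chi=\sum_{j=1}^N\chi_j-(l-n)(N-1)$ is essential, since it is what makes the summed upper bound equal $1-\chi_N/\chi$ instead of exceeding $1$, and where the strict sign inequalities $\chi_j<0$ and $\chi<0$ are indispensable, as without them the admissible region could be empty or $w_N$ could fail to be positive.
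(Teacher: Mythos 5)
Your proposal is correct and follows essentially the same route as the paper's own proof: reduce to the numerical inequalities $w_j\chi<\chi_j<w_j\chi+(l-n)$ via \cite[Theorem 1, Step-2]{Bigas91}, use $\chi_j<0$ and $\chi<0$ to place each $w_j$ in the interval $\bigl(\tfrac{\chi_j}{\chi},\tfrac{\chi_j-(l-n)}{\chi}\bigr)\subset(0,1)$, and sum to check $w_N>0$. Your explicit observation that the summed upper bound collapses to $1-\tfrac{\chi_N}{\chi}<1$ is exactly the compatibility the paper's final step relies on, just spelled out more fully.
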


Now suppose that $(L,V)$ is a generated pair where $L$ is a line bundle on $C$ and $(L_j,V_j)$ is a general linear series for each $j$. Then it is well known that $M_{L_j,V_j}$ is semistable.
\begin{theorem}[{\cite[Theorem 5.1]{Usha15}}] \label{Butler's conjecture for line bundles}
    Let $X$ be a smooth general curve of genus $g \geq 1$ and $(L, V)$ a general linear series of type $(d, n+1)$ (i.e., ${\rm deg}(L) = d$, ${\rm dim}(V) = n+1$ and $V$ generates $L$). 
    Then $M_{L,V}$ is semistable.
\end{theorem}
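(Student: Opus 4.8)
Although this statement is quoted from \cite{Usha15} and is used here as a black box, here is how I would approach a proof of Theorem \ref{Butler's conjecture for line bundles}.

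First I would record the numerology and reduce the problem. Write $M := M_{L,V}$, so that $\mathrm{rk}(M) = n$, $\deg(M) = -d$, and $\mu(M) = -d/n$. Semistability of $M$ means $\mu(F) \le -d/n$ for every subbundle $0 \ne F \subsetneq M$; dualizing the defining sequence gives $0 \to L^{-1} \to V^* \otimes \mathcal{O}_X \to M^* \to 0$, so it is equivalent to show that every quotient bundle $M^* \twoheadrightarrow Q$ of rank $q$ (with $1 \le q \le n-1$) satisfies $\mu(Q) \ge d/n$. Because semistability is an open condition in families, it suffices to verify this for a single general pair $(L,V)$ of type $(d,n+1)$, and this is precisely what lets me exploit genericity.

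The main tool I would use is that $M^*$, being a quotient of a trivial bundle, is globally generated, and hence so is any quotient $Q$; moreover $Q$ is generated by the image $W$ of $V^*$ in $H^0(Q)$ under the surjection $V^* \otimes \mathcal{O}_X \to M^* \to Q$, with $\dim W \le n+1$. Applying the Sierra-type bound (Theorem \ref{degree bound for gg vb}, in the form valid for a generating subspace on a curve) to the generated pair $(Q,W)$ yields $\deg Q \ge \dim W - q$. A destabilizing quotient would satisfy $\deg Q < qd/n$, so combining these gives $\dim W < q(d+n)/n$; a contradiction therefore follows as soon as I can guarantee the opposite inequality $\dim W \ge q(d+n)/n$, i.e.\ a sufficiently strong lower bound on the number of sections of $Q$ that descend from $V^*$.

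Lower-bounding $\dim W$ is exactly where the generality of $(L,V)$ must enter, and this is the step I expect to be the main obstacle. Concretely, $\dim W = (n+1) - h^0(\widetilde{S})$, where $\widetilde{S} \subset V^* \otimes \mathcal{O}_X$ is the saturated preimage of the kernel subbundle $S = \ker(M^* \to Q)$; since $\widetilde{S}$ sits inside a trivial bundle, $H^0(\widetilde{S})$ is the common fibre intersection $\bigcap_{x \in X} \widetilde{S}_x \subseteq V^*$, and I must show this intersection is as small as a general linear series forces it to be. Making this precise amounts to proving that the locus of pairs $(L,V)$ admitting an abnormally special subbundle $S$ of $M^*$ has positive codimension in the parameter space of linear series of type $(d,n+1)$, which is a Brill--Noether / general position statement. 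I would attack it either by a dimension count on the incidence variety of pairs $\big((L,V),\,Q\big)$, bounding the fibre dimension of the forgetful map to the space of quotients and comparing it with the dimension of the Quot-type scheme parametrizing such $Q$, or else by degenerating $X$ to a suitable reducible nodal curve and invoking restriction criteria in the spirit of Section \ref{section:2} to check the required bound on each component directly, before using openness of semistability to transport the conclusion back to the general smooth curve.
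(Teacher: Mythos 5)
First, a point of comparison: the paper does not prove this statement at all --- it is imported verbatim from \cite{Usha15} and used as a black box, so there is no internal proof to measure your sketch against. The question is only whether your outline could plausibly be completed, and there it has a genuine gap, in fact two. The one you acknowledge is that the entire content of the theorem is concentrated in the step you defer: the lower bound on $\dim W$ coming from generality of $(L,V)$ is not a technical lemma you can wave at with ``a dimension count on the incidence variety'' --- it \emph{is} the theorem, restated. Deferring it leaves the proposal as a reduction, not a proof.

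The more serious problem is that the quantitative mechanism you propose cannot close the argument in the main range of the theorem. You want to contradict a destabilizing quotient $M^* \twoheadrightarrow Q$ of rank $q$ (so $\deg Q < qd/n$) by combining the Sierra-type bound $\deg Q \ge \dim W - q$ with a hoped-for lower bound $\dim W \ge q(d+n)/n$. But $W$ is a quotient of $V^*$, so $\dim W \le n+1$ unconditionally, whereas $q(d+n)/n > n+1$ as soon as $q(d+n) > n(n+1)$ --- which holds for \emph{every} $q \ge 1$ once $d > n^2$. In that range no genericity statement about $(L,V)$ can supply the inequality you need: the global-generation bound is linear in a quantity capped at $n+1$, while the target $qd/n$ grows with $d$. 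Concretely, for $n=2$ and $d$ large, a destabilizing quotient line bundle of $M^*$ has degree just below $d/2$, and your bound only yields $\deg Q \ge \dim W - 1 \le 2$. The actual proof in \cite{Usha15} does not proceed through degree bounds for globally generated bundles; it exploits generality via the theory of coherent systems and the dual span construction (building on the range $d \ge 2g$ treated in \cite{Ein92, Butler94}), which is a structurally different use of the hypothesis. Of the two fallback strategies you name, the degeneration route is the one closer to something workable, but it requires limit-linear-series machinery rather than the restriction criteria of Section \ref{section:2}.
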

 We provide a complete characterization of kernel bundles associated to the generated pair $(L,V)$ on $C$ that are semistable with respect to a polarization $w$ under some mild assumptions on the degrees of $L_j$.
\begin{theorem}\label{thm: about (L,V)}
Let $C$ be a comb-like curve whose components are general members of their corresponding moduli spaces. Suppose that $L$ is a globally generated line bundle on $C$ and $(L, V)$ is a generated pair such that $(L_j, V_j)$ is a general linear series for each $j$. Then we have the following:
\begin{enumerate}
\item[(i)] If $\text{rank}(M_{L, V}) = 2$, then there exists a polarization $w$ such that $M_{L, V}$ is $w$-semistable if and only if $M_{L, V} \otimes \mathcal{O}_{C_j} \cong M_{L_j, V_j}$ for each $j \in \{1,\dots,N\}$.
\item[(ii)] Let $\text{rank}(M_{L,V}) = l-1 > 2$. 
\begin{enumerate}
    \item If $l -1$ divides $d_j$ for each $j$ $(\text{where} \; d_j = \deg(L_j))$, then there exists a polarization $w$ such that $M_{L, V}$ is $w$-semistable if and only if $M_{L, V} \otimes \mathcal{O}_{C_j} \cong M_{L_j, V_j}$.
    \item If $d_j \neq l-1-r_j$ for each $j \in \{1,\dots,N-1\}$, then there exists a polarization $w$ such that $M_{L, V}$ is $w$-semistable if and only if $M_{L, V} \otimes \mathcal{O}_{C_j} \cong M_{L_j, V_j}$.
\end{enumerate}

\end{enumerate}
\begin{proof}
Suppose that $M_{L, V} \otimes \mathcal{O}_{C_j} \cong M_{L_j, V_j}$ for each $j$. By Theorem \ref{Butler's conjecture for line bundles}, 
$M_{L_j, V_j}$ is semistable on $C_j$ as $(L_j, V_j)$ is a general linear series. This implies $M_{L,V} \otimes \mathcal{O}_{C_j}$ is semistable for each $j$. So, by Theorem \ref{thm: existence of polarization}, 
there exists a polarization $w$ such that $M_{L, V}$ is $w$-semistable. This proves the forward implication for both the cases.

Conversely, suppose that $M_{L, V}$ is $w$-semistable. 
\begin{enumerate}
    \item[(i)] If $M_{L, V} \otimes \mathcal{O}_{C_j} \ncong M_{L_j, V_j}$ for some $j$, then $\ker ({\rho_j}_{\mid_V}) \neq 0$ for some $j \in \{1,\dots,N-1\}$. This implies by (i) of Theorem \ref{cor:  on strongly unstable} that $M_{L,V}$ is strongly unstable, which is a contradiction.
   
    \item[(ii)(a)] As $l-1$ divides $d_j$ for each $j$, it forces $\ker ({\rho_j}_{\mid_V})$ to be zero, by Remark \ref{crucial remark after Proposition 3.1}. Therefore, $M_{L, V} \otimes \mathcal{O}_{C_j} \cong M_{L_j, V_j}$ for each $j$.
    \item[(ii)(b)] Again, if $M_{L, V} \otimes \mathcal{O}_{C_j} \ncong M_{L_j, V_j}$ for some $j$, then $\ker ({\rho_j}_{\mid_V}) \neq 0$ for some $j \in \{1,\dots,N-1\}$. This, along with the fact that $d_j \neq l-1-r_j$ implies by (ii) of Theorem \ref{cor:  on strongly unstable} that $M_{L,V}$ is strongly unstable, which is a contradiction.
\end{enumerate}
\end{proof}
\end{theorem}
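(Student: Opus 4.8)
The plan is to prove Theorem \ref{thm: about (L,V)} as a clean packaging of the machinery already built, with the forward direction handled uniformly and the three cases of the converse distinguished only by which earlier result supplies the contradiction. First I would establish the forward implication once and for all: if $M_{L,V}\otimes\mathcal{O}_{C_j}\cong M_{L_j,V_j}$ for every $j$, then since the components $C_j$ are general and $(L_j,V_j)$ is a general linear series, Theorem \ref{Butler's conjecture for line bundles} gives that each $M_{L_j,V_j}$ is semistable on $C_j$. Hence every restriction $M_{L,V}\otimes\mathcal{O}_{C_j}$ is semistable, and Theorem \ref{thm: existence of polarization} produces a polarization $w$ for which $M_{L,V}$ is $w$-semistable. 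This paragraph covers the ``if'' direction of all of (i), (ii)(a) and (ii)(b) simultaneously, so the remaining work is entirely in the converse.

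For the converse I would argue by contraposition, assuming $M_{L,V}$ is $w$-semistable and deriving that the restrictions must be isomorphic to the $M_{L_j,V_j}$. The key structural input is the commutative diagram preceding Proposition \ref{prop: on unstable bundle}, whose top row
\[
0 \rightarrow \ker(\rho_{j_{\mid_V}})\otimes\mathcal{O}_{C_j} \rightarrow M_{L,V}\otimes\mathcal{O}_{C_j} \rightarrow M_{L_j,V_j} \rightarrow 0
\]
shows that $M_{L,V}\otimes\mathcal{O}_{C_j}\cong M_{L_j,V_j}$ precisely when $\ker(\rho_{j_{\mid_V}})=0$. So in each case it suffices to force the vanishing of these kernels. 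In case (i), where $\operatorname{rank}(M_{L,V})=2$, if some kernel were nonzero then Theorem \ref{cor: on strongly unstable}(i) would make $M_{L,V}$ strongly unstable, contradicting $w$-semistability; hence all kernels vanish. In case (ii)(b), where $\operatorname{rank}(M_{L,V})=l-1>2$ and $d_j\neq l-1-r_j$ for each $j\in\{1,\dots,N-1\}$, a nonzero kernel would invoke Theorem \ref{cor: on strongly unstable}(ii) to the same contradictory effect. Case (ii)(a) is slightly different in flavor: here the hypothesis that $l-1$ divides $d_j$ forces $\ker(\rho_{j_{\mid_V}})=0$ directly via Remark \ref{crucial remark after Proposition 3.1} (since $\chi_j=-d_j+(l-1)(1-g_j)$ is then divisible by $l-1$, and Proposition \ref{line subbundle cant destabilize} prohibits a nonzero kernel), without needing to pass through strong instability.

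The main subtlety I would watch is the bookkeeping around which kernels are controlled. Theorem \ref{cor: on strongly unstable} and Remark \ref{crucial remark after Proposition 3.1} constrain $\ker(\rho_{j_{\mid_V}})$ only for $j\in\{1,\dots,N-1\}$, i.e.\ the teeth of the comb, whereas the claim requires the isomorphism for all $j\in\{1,\dots,N\}$ including the fixed spine $C_N$. I would resolve this by reusing the observation made at the start of the proof of Theorem \ref{cor: on strongly unstable}: $\ker(\rho_{N_{\mid_V}})=\bigoplus_{i=1}^{N-1}\bigl(V\cap H^0(E_i(-p_i))\bigr)$, so if the $N$-th kernel were nonzero then some summand $V\cap H^0(L_i(-p_i))$ would be nonzero, which in turn forces $\ker(\rho_{j_{\mid_V}})\neq 0$ for the teeth $j\neq i$; thus vanishing of all the teeth kernels already forces vanishing of the spine kernel. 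This is the one place where the argument is not a mechanical citation, and it is where I would be most careful. The rest reduces to assembling the cited results, so I do not expect genuine difficulty beyond this indexing point.
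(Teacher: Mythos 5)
Your proposal is correct and follows essentially the same route as the paper: Theorem \ref{Butler's conjecture for line bundles} plus Theorem \ref{thm: existence of polarization} for the forward direction, and Theorem \ref{cor:  on strongly unstable} (resp.\ Remark \ref{crucial remark after Proposition 3.1} in case (ii)(a)) for the converse. Your explicit treatment of the spine component $C_N$ via $\ker(\rho_{N_{\mid_V}})=\bigoplus_{i=1}^{N-1}\bigl(V\cap H^0(L_i(-p_i))\bigr)$ is a welcome elaboration of a step the paper leaves implicit, but it does not change the argument.
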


For the higher rank case, we obtain the similar results as in Theorem \ref{thm: about (L,V)} provided Butler's conjecture (see Conjecture \ref{Butler-conjecture}) is true.
 \begin{theorem} \label{The iff condition}
 Let $C$ be as in Theorem \ref{thm: about (L,V)}. Let $(E_i,V_i)$ be a general choice of a generated pair on $C_i$ 
 with $E_i$ semistable of rank $n$ for each $i$, and $E$ be a vector bundle on $C$ obtained by gluing $E_i$'s. Then there exists a linear subspace $V \subseteq H^0(E)$ such that $\dim V = l > n$ and $(E,V)$ is a generated pair on $C$. Further, suppose that the Butler's conjecture is true. Then the following are true:
     \begin{enumerate}
         \item[(i)] If $\text{rank}(M_{E,V}) =2$, then there exists a polarization $w$ such that $M_{E,V}$ is $w$-semistable if and only if ${M_{E,V}} \otimes \mathcal{O}_{C_j} \cong M_{E_j,V_j}$ for each $j$.
                 \item[(ii)] If $\text{rank}(M_{E,V}) = (l-n) > 2$, and
         \begin{enumerate}
             \item If $l-n$ divides $d_j$ for each $j$, then there exists a polarization $w$ 
         such that $M_{E,V}$ is $w$-semistable if and only if ${M_{E,V}} \otimes \mathcal{O}_{C_j} \cong M_{E_j,V_j}$ for each $j$.
         \item If $d_j \neq l-n-r_j$ for each $j \in \{1,\dots,N-1\}$, then there exists a polarization $w$ 
         such that $M_{E,V}$ is $w$-semistable if and only if ${M_{E,V}} \otimes \mathcal{O}_{C_j} \cong M_{E_j,V_j}$ for each $j \in \{1,\dots,N\}$.
         \end{enumerate}
     \end{enumerate}
\begin{proof}
 The existence of a linear subspace $V \subseteq H^0(E)$ with the desired properties follows directly from the way $E$ is constructed out of $E_i$'s. \\
 Now, for the remaining parts of the proof, suppose that the Butler's conjecture is true.
If ${M_{E,V}} \otimes \mathcal{O}_{C_j} \cong M_{E_j, V_j}$ for each $j$, then since $M_{E_j, V_j}$ is semistable (by Butler's conjecture), we conclude that ${M_{E,V}} \otimes \mathcal{O}_{C_j}$ is semistable. Therefore, there exists a polarization $w$ such that $M_{E, V}$ is $w$-semistable (by Theorem \ref{thm: existence of polarization}).

Conversely, suppose that $M_{E,V}$ is $w$-semistable. If ${M_{E,V}} \otimes \mathcal{O}_{C_j} \ncong M_{E_j,V_j}$ for some $j$, then $\ker ({\rho_j}_{\mid_V}) \neq 0$ for some $j \in \{1,\dots,N-1\}$. The results now follow from Theorem \ref{cor:  on strongly unstable}.
\end{proof}
 \end{theorem}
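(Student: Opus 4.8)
The plan is to prove Theorem \ref{The iff condition} by reducing it, through the same template used for the line-bundle case (Theorem \ref{thm: about (L,V)}), to the three tools already established: the existence-of-polarization result (Theorem \ref{thm: existence of polarization}), the strong-unstability dichotomy (Theorem \ref{cor:  on strongly unstable}), and the assumed truth of Butler's conjecture (Conjecture \ref{Butler-conjecture}). The first step is the existence of $V$: since $E$ is glued from the $E_i$ along the nodes, a section of $E$ is precisely a compatible tuple of sections $s_i \in H^0(C_i, E_i)$ agreeing at the glued fibers over the nodes $p_j$. I would exhibit $V$ by taking, for each $i$, the generating space $V_i \subseteq H^0(C_i, E_i)$ of the generated pair $(E_i, V_i)$, and then assembling those sections that glue compatibly at the nodes into a subspace $V \subseteq H^0(C, E)$; a dimension count (the gluing conditions at each of the $N-1$ nodes are $n$ linear conditions, and for general data these are independent) shows one can arrange $\dim V = l > n$ and that $V$ generates $E$ globally, since each $V_i$ already generates $E_i$ on $C_i$.

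Next, for the forward (``if'') direction in all of cases (i), (ii)(a) and (ii)(b), I would argue exactly as in Theorem \ref{thm: about (L,V)}: the hypothesis $M_{E,V}\otimes\mathcal{O}_{C_j}\cong M_{E_j,V_j}$ for each $j$, combined with Butler's conjecture applied to the general generated pair $(E_j,V_j)$ with $E_j$ semistable, gives that $M_{E_j,V_j}$—hence $M_{E,V}\otimes\mathcal{O}_{C_j}$—is semistable for each $j$. Then Theorem \ref{thm: existence of polarization} immediately produces a polarization $w$ for which $M_{E,V}$ is $w$-semistable. This is the routine half and requires no new ideas beyond invoking the already-proved existence theorem.

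For the converse (``only if'') direction, the strategy is contrapositive and splits by case. Assume $M_{E,V}$ is $w$-semistable for some $w$. If $M_{E,V}\otimes\mathcal{O}_{C_j}\ncong M_{E_j,V_j}$ for some $j$, then the commutative diagram relating $M_{E,V}\otimes\mathcal{O}_{C_j}$, $M_{E_j,V_j}$ and $\ker({\rho_j}_{\mid_V})\otimes\mathcal{O}_{C_j}$ forces $\ker({\rho_j}_{\mid_V})\neq 0$ for some $j\in\{1,\dots,N-1\}$ (the failure of the isomorphism is exactly the nonvanishing of this kernel term). In case (i), rank $2$, part (i) of Theorem \ref{cor:  on strongly unstable} then yields that $M_{E,V}$ is strongly unstable, contradicting $w$-semistability. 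In case (ii)(a), the divisibility hypothesis $(l-n)\mid d_j$ together with Remark \ref{crucial remark after Proposition 3.1} already forces $\ker({\rho_j}_{\mid_V})=0$, so the nonisomorphism cannot occur at all. In case (ii)(b), the hypothesis $d_j\neq l-n-r_j$ lets part (ii) of Theorem \ref{cor:  on strongly unstable} apply, again giving strong unstability and the same contradiction.

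The only genuinely new obstacle, and thus the step I would treat most carefully, is the construction of $V$ in the higher-rank setting: one must verify that the node-gluing conditions can be imposed transversally so that a subspace of the desired dimension $l>n$ survives, still generates $E$, and makes $(E_j,V_j)$ recover the prescribed general generated pairs on each component (so that Butler's conjecture is applicable). Everything after that is a faithful transcription of the line-bundle argument, with the rank-$n$ Theorem \ref{cor:  on strongly unstable} substituted for its rank-specific counterparts; no new estimates are needed. I would therefore spend the bulk of the write-up justifying the existence and genericity of $V$, and dispatch the semistability equivalences by direct citation of the earlier theorems.
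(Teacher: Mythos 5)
Your proposal follows essentially the same route as the paper's proof: the forward direction via Butler's conjecture plus Theorem \ref{thm: existence of polarization}, and the converse by showing that $M_{E,V}\otimes\mathcal{O}_{C_j}\ncong M_{E_j,V_j}$ forces $\ker({\rho_j}_{\mid_V})\neq 0$ and then invoking Theorem \ref{cor:  on strongly unstable} (with Remark \ref{crucial remark after Proposition 3.1} handling case (ii)(a)). The extra care you propose for constructing $V$ is more detail than the paper supplies (it simply asserts existence from the gluing construction), but it is the same idea and introduces no divergence.
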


\section*{Acknowledgements}
	We thank the referee for carefully reading the article and for all the suggestions which have improved the overall quality of the article.

	The first and the third authors were partially supported by a grant from Infosys Foundation. The third author was supported by the National Board for Higher Mathematics (NBHM), Department of Atomic Energy, Government of India (0204/2/2022/R\&D-II/1785).
	
\end{document}